\DeclareMathAlphabet{\mathpzc}{OT1}{pzc}{m}{it}
\newtheorem{theorem}{Theorem}[section]
\newtheorem*{MainTheorem}{Main Theorem}
\newtheorem{proposition}[theorem]{Proposition}
\newtheorem{lemma}[theorem]{Lemma}
\theoremstyle{definition}
\newtheorem{definition}[theorem]{Definition}
\theoremstyle{remark}
\newtheorem{remark}[theorem]{Remark}
\newcommand{\CO}{{\mathcal O}}
\newcommand{\CX}{{\mathcal X}}
\newcommand{\SA}{{\mathscr A}}
\newcommand{\SB}{{\mathscr B}}
\newcommand{\SF}{{\mathscr F}}
\newcommand{\SK}{{\mathscr K}}
\newcommand{\SR}{{\mathscr R}}
\newcommand{\SZ}{{\mathscr Z}}
\newcommand{\fp}{{{\mathfrak p}}}
\newcommand{\hE}{{\widehat E}}
\newcommand{\hF}{{\widehat F}}
\newcommand{\hM}{{\widehat M}}
\newcommand{\hf}{{\widehat f}}
\newcommand{\tM}{{\widetilde{M}}}
\newcommand{\tf}{{\widetilde{f}}}
\newcommand{\DZ}{{\mathbb Z}}
\newcommand{\DN}{{\mathbb N}}
\newcommand{\DQ}{{\mathbb Q}}
\newcommand{\DF}{{\mathbb F}}
\newcommand{\End}{{\operatorname{End}}}
\newcommand{\Ext}{{\operatorname{Ext}}}
\newcommand{\Hom}{{\operatorname{Hom}}}
\newcommand{\im}{{\operatorname{im}\,}}
\newcommand{\ol}{\overline}
\newcommand{\id}{{\operatorname{id}}}
\newcommand{\Quot}{{\operatorname{Quot\,}}}
\newcommand{\irr}{{\operatorname{irr}}}
\newcommand{\comment}[1]{}
\newcommand{\lgl}{\langle}
\newcommand{\rgl}{\rangle}
\newcommand{\qchoose}[2]{\left[{#1\atop#2}\right]}
\begin{document}

\pagenumbering{arabic}
\title[]{Tilting modules and torsion phenomena} \author[]{Peter Fiebig}
\begin{abstract}  We construct  families of representations for quantum groups over $\DZ[v,v^{-1}]$-algebras that interpolate between Weyl modules and tilting modules. These families might be  candidates for objects with characters satisfying the {\em generations of characters} philosophy of Lusztig and Lusztig-Williamson. 
\end{abstract}

\address{Department Mathematik, FAU Erlangen--N\"urnberg, Cauerstra\ss e 11, 91058 Erlangen}
\email{fiebig@math.fau.de}
\maketitle

\section{Introduction}
Let $R$ be a root system and $\SA$ a  unital $\SZ:=\DZ[v,v^{-1}]$-algebra. To these data one associates the quantum group $U_\SA=U_\SZ\otimes_\SZ \SA$, where $U_\SZ$ is Lusztig's integral quantum group that is defined using divided powers. Then $U_\SZ$, and hence $U_\SA$, admits a triangular tensor decomposition, and so it gives rise to a version $\CO_\SA$ of the classical BGG-category $\CO$ associated with a complex semisimple Lie algebra. 

In this article we mostly assume that $\SA$ is local, factorial and {\em generic} (meaning that the quantum integers $[n]_d$ with $n,d\ne 0$ do not vanish in $\SA$). We consider the following two families 
 of objects in $\CO_\SA$. The first is the family of Weyl modules $W_\SA(\lambda)$, the second the family of tiliting modules $T_\SA(\lambda)$. Both families are parametrized by the highest weight $\lambda$, which is a dominant and integral weight. Note that if one is interested in non-generic situations (e.g., $\SA$ being a field of positive characteristic or a cyclotomic extension of $\DQ$), one can obtain valuable information by deforming objects such as Weyl or tilting modules to a generic and local algebra. 
  
There are two, quite different, notions of tilting modules in $\CO_\SA$. From one perspective, it is reasonable to consider the Verma modules in $\CO_\SA$ as the standard objects. Then a tilting module is an object in $\CO_\SA$ that admits a Verma filtration, as well as a dual Verma filtration. As each Verma module is free over $\SA$ of infinite rank, we obtain objects of infinite rank in $\CO_\SA$. One might call these tilting modules the {\em fat} tilting modules.  On the other hand, if one is interested in the subcategory of $\CO_\SA$ that contains objects of finite rank, then one should consider the Weyl modules as the standard objects. A tilting module in this context is an object  that admits a Weyl filtration and a dual Weyl filtration. For any dominant $\lambda$ such a tilting module exists and is unique up to isomorphism, and we might call these the {\em thin} tilting modules. In this article, by ``tilting module'' we mean thin tilting modules. 

A particularly important problem in representation theory is the determination of the Weyl multiplicities for the thin tilting modules. If $\SA$ is a field of characteristic zero and the image $q$ of $v$ in $\SA$ is a root of unity $\ne 1$ (the ``quantum case''), then these are determined by Soergel in \cite{Soe}. If $\SA$ is a field of positive characteristic and $q=1$ (the ``modular case''), then one can calculate these multiplicities in the diagrammatic Hecke category of Elias and Williamson (cf.~\cite{AMRW,BR,C}). So far a combinatorial formula for the results of these calculations, i.e.~for the $p$-canonical basis in the Hecke algebra, is not found nor even conjectured. 

In the modular case, Lusztig and Williamson conjectured in \cite{LW} the existence of {\em generations of tilting characters}, building on an analogous idea of Lusztig \cite{L1} in the case of irreducible characters. There should be a set $\{\theta_\lambda^l\}$ of elements in the group algebra of the weight lattice, indexed by a dominant $\lambda$ and an integer $l\ge0$, satisfying certain properties. Among those properties are the following: $\theta_\lambda^0$ is the character of the Weyl module, $\theta_\lambda^1$ is the character of the tilting module over the quantum group at a $p$-th root of unity, and $\theta_\lambda^\infty$ is the character of the modular tilting module. Moreover, each $\theta_\lambda^l$ should be a positive linear combination of $\theta_\mu^{l-1}$'s. A formula or an algorithm for these characters is not known so far. It is natural to assume that the $\theta_\lambda^l$ are tilting characters for a new type of ``quantum groups''.  The ``Morava $E$-theoretical quantum groups'' that were recently constructed by Yang and Zhao in \cite{YZ} are natural candidates for these algebraic structures. 

In this paper we propose a different approach. Let us fix an odd prime number $p$ and assume that $p\ne 3$ if the root system contains a component of type $G_2$. We denote by $\SZ_{\fp}$ the ``quantum $p$-adic integers'', i.e.~the localization of $\DZ[v]$ at the kernel of the homomorphism $\DZ[v]\to\DF_p$ that sends $v$ to $1$.  We fix a local and generic $\SZ_\fp$-algebra $\SA$ (the case $\SA=\SZ_\fp$ being the main example),  and construct a  family of objects $T_{\Theta}(\lambda)$ in $\CO_\SA$ that interpolate between the Weyl modules and the tilting modules.  Here, $\Theta$ denotes a subset of the natural numbers $\{0,1,2,\dots\}$,   and $\lambda$ again is a dominant weight (note that our construction works for arbitrary weights $\lambda$).  This family of objects satisfies the following properties.

\begin{MainTheorem}  Let $\lambda$ be a dominant weight.
\begin{enumerate}
\item $T_{\emptyset}(\lambda)$ is the Weyl module with highest weight $\lambda$. 
\item $T_{\DN}(\lambda)$ is the indecomposable tilting module with highest weight $\lambda$.
\item Each $T_{\Theta}(\lambda)$ admits a Weyl filtration.
\item  If $l\in\Theta$, then the character of $T_{\Theta}(\lambda)$ is a sum of tilting characters of the quantum group at an $p^l$-th root of unity.
\end{enumerate}
\end{MainTheorem}

The main idea for the construction is the following. For any object $M$ in $\CO_\SA$ that admits a Weyl filtration and any weight $\mu$ we consider a certain torsion $\SA$-module (denoted by $M_{\mu,\max}/M_{\lgl\mu\rgl}$ in the article). We show that for the Weyl modules, these torsion modules are as large as possible, whereas for the tilting modules they vanish. We also show that  these torsion modules are annihilated by a product of the $p^l$-th cyclotomic polynomials $\tau_l\in\SZ_{\fp}$. The objects $T_{\Theta}(\lambda)$ are now the minimal objects that have the property that they do not contain $\tau_{l}$-torsion for all  $l\in\Theta$.

The family of characters $\gamma_\lambda^l$ of $T_{\{1,\dots,l\}}(\lambda)$ might be a candidate for the conjectured family $\{\theta_\lambda^l\}$. Note that it is not clear from the construction whether each $\gamma_\lambda^l$ can be written as a sum of $\gamma_\mu^{l-1}$'s. The above theorem only establishes that each $\gamma_\lambda^l$ can be written as a sum of $\gamma_\mu^0$'s, or as a sum of $\gamma_\mu^1$,s.

As a main tool for the construction of the above objects we  introduce an auxiliary category $\CX_\SA$ that contains ``$X$-graded objects with operators in simple root directions''. The definition of $\CX_\SA$ is strongly motivated by the ideas in the article \cite{LefOp}. We show that $\CX_\SA$ can be fully embedded into category $\CO_\SA$, and  that the subcategory $\CX_\SA^f$ that contains the objects  that are free of finite rank over $\SA$, coincides with the subcategory of $\CO_\SA$ of objects that admit a finite Weyl filtration.

{\bf Acknowledgements:}  I would like to thank Henning Haahr Andersen for valuable comments on an earlier version of the article. The article is partly based upon work supported by a grant of the Institute for Advanced Study School of Mathematics.

\section{$X$-graded spaces with operators}\label{sec-Xgradop}\label{sec-Xgrad}
We fix a root system $R$ and define the  category $\CX=\CX_\SA(R)$ of {\em graded spaces with operators}. This is a linear category over a unital $\DZ[v,v^{-1}]$-algebra $\SA$ that is factorial.  In  Section \ref{sec-qg} we show that $\CX$ can be embedded into the category of representations of the quantum group $U_\SA$  associated with $R$ over $\SA$. 

\subsection{Quantum integers}\label{subsec-qint} Let $v$ be an indeterminate and set $\SZ:=\DZ[v,v^{-1}]$. For $n\in\DZ$ and $d>0$  we define the quantum integer 
$$
[n]_d:=\frac{v^{dn}-v^{-dn}}{v^d-v^{-d}}=\begin{cases}
0,&\text{ if $n=0$},\\
v^{d(n-1)}+v^{d(n-3)}+\dots+v^{d(-n+1)},&\text{ if $n> 0$}, \\
-v^{d(-n-1)}-v^{d(-n-3)}-\dots-v^{d(n+1)},&\text{ if $n<0$}.
\end{cases}
$$
The quantum factorials are given by   $[0]_d^!:=1$ and
$
[n]_d^!:=[1]_d\cdot[2]_d\cdots[n]_d
$ for $n\ge 1$. The quantum binomial coefficients are  $\qchoose{n}{0}_d:=1$ 
and 
$
\qchoose{n}{r}_d:=\frac{[n]_d\cdot[n-1]_d\cdots[n-r+1]_d}{[1]_d\cdot [2]_d\cdots [r]_d}$ for $n\in\DZ$ and $r\ge 1$.
Note that under the ring homomorphism $\SZ\to\DZ$ that sends $v$ to $1$, the quantum integer $[n]_d$ is sent to $n$ for all $n\in\DZ$, independently of $d$. Hence $[n]^!_d$ is sent to $n!$ and $\qchoose{n}{r}_d$ to $n\choose r$.

\subsection{Graded spaces with operators}\label{subsec-gradspac}
For the rest of this article we fix a root system $ R$ in a real vector space $V$ and a basis $\Pi$ of $R$.  The coroot for $\alpha\in R$ is $\alpha^\vee\in V^\ast$, and the weight lattice is $X:=\{\lambda\in V\mid \lgl \lambda,\alpha^\vee\rgl\in\DZ\text{ for all $\alpha\in R$}\}$. We
denote by $\le$ the standard partial order on $X$, i.e.~$\mu\le\lambda$ if and only if $\lambda-\mu$ can be written as a sum of elements in $\Pi$.

\begin{definition} \begin{enumerate}
\item A subset $I$ of $X$ is called {\em closed} if $\mu\in I$ and $\mu\le\lambda$ imply $\lambda\in I$.
\item A subset $S$ of $X$ is called {\em  quasi-bounded} if for any $\mu\in X$ the set $\{\lambda\in S\mid \mu\le\lambda\}$ is finite.
\end{enumerate}
\end{definition}

Now let  $\SA$ be a unital $\SZ$-algebra. A general assumption throughout this article is that $\SA$ is a factorial domain, i.e. it is a domain and every element can be uniquely (up to units) written as a product of irreducible elements. Let $I$ be a closed subset of $X$, and let $M=\bigoplus_{\mu\in I}M_\mu$ be an $I$-graded $\SA$-module. 
We say that $\mu$ is a {\em weight of $M$} if $M_\mu\ne\{0\}$. 
For any $\mu\in I$, $\alpha\in\Pi$, and $n>0$ let
\begin{align*}
F_{\mu,\alpha,n}&\colon M_{\mu+n\alpha}\to M_\mu,\\
E_{\mu,\alpha,n}&\colon M_{\mu}\to M_{\mu+n\alpha}
\end{align*}
be $\SA$-linear homomorphisms. 
It is convenient to set $E_{\mu,\alpha,0}=F_{\mu,\alpha,0}:=\id_{M_\mu}$.
In the following we often suppress the index ``$\mu$'' in the notation of the $E$- and $F$-maps if the source of the maps is clear from the context, but we sometimes also write $E_{\alpha,n}^M$ and $F_{\alpha,n}^M$ to specify the object $M$ on which these homomorphisms are defined.  

Now we list some conditions  on the above data. Denote by $A=(\lgl\alpha,\beta^\vee\rgl)_{\alpha,\beta\in\Pi}$ the Cartan matrix associated with the root system $R$. Then there exists a vector $d=(d_\alpha)_{\alpha\in\Pi}$ with entries in $\{1,2,3\}$ such that $(d_\alpha \lgl\alpha,\beta^\vee\rgl)_{\alpha,\beta\in\Pi}$ is symmetric and such that each irreducible component of $R$ contains some $\alpha\in \Pi$ with  $d_\alpha=1$. The first two conditions are as follows. 

\begin{enumerate}
\item[(X1)] The set of weights of $M$  is  quasi-bounded and each $M_\mu$ is finitely generated as an $\SA$-module. 
\item[(X2)] For all $\mu\in I$, $\alpha,\beta\in\Pi$, $m,n>0$, and $v\in M_{\mu}$,
$$
E_{\alpha,m}F_{\beta,n}(v)=
\begin{cases}
F_{\beta,n}E_{\alpha,m}(v),&\text{ if $\alpha\ne\beta$,}\\
\sum_{0\le r\le \min(m,n)} \qchoose{\lgl\mu,\alpha^\vee\rgl+m-n}{r}_{d_\alpha}F_{\alpha,n-r}E_{\alpha,m-r}(v),&\text{ if $\alpha=\beta$}.
\end{cases}
$$
\end{enumerate}
(The cautious reader may want to have a look at Equation (a2) in Section 6.5 of \cite{L2} to get an idea of where the second equation comes from.)

\subsection{Torsion subquotients}
In order to formulate the third condition, we need some definitions. For any $\mu\in I$ define
$$
M_{\delta\mu}:=\bigoplus_{\alpha\in\Pi, n>0} M_{\mu+n\alpha}.
$$
Let
\begin{align*}
E_\mu&\colon M_\mu\to M_{\delta\mu},\\
F_\mu&\colon M_{\delta\mu}\to M_\mu
\end{align*}
be the column and the row vector with entries $E_{\mu,\alpha,n}$ and $F_{\mu,\alpha,n}$, resp. We sometimes write $E_\mu^M$ and $F_\mu^M$ in order to  specify the object $M$ on which $E_\mu$ and $F_\mu$ act. Set
\begin{align*}
M_{\{\mu\}}&:=E_\mu(\im F_\mu),\\
M_{\lgl\mu\rgl}&:=E_\mu(M_\mu).
\end{align*}
So we have inclusions $M_{\{\mu\}}\subset M_{\lgl\mu\rgl}\subset M_{\delta\mu}$.

\begin{enumerate}
\item[(X3)] For all $\mu\in I$ the following holds:
\begin{enumerate}
\item The restriction of $E_\mu\colon M_\mu\to M_{\delta\mu}$ to $\im F_\mu\subset M_{\mu}$ is injective and hence induces an isomorphism $\im F_\mu\xrightarrow{\sim} M_{\{\mu\}}$.
\item  The quotient $M_{\lgl\mu\rgl}/M_{\{\mu\}}$ is a torsion $\SA$-module. 
\item $M_\mu/\im F_\mu$ is a free $\SA$-module.
\end{enumerate} 
\end{enumerate}
Here is our first, rather easy, result.

\begin{lemma} \label{lemma-torfree} Suppose that our data satisfies (X1) and (X3). Then $M$ is a torsion free $\SA$-module. In particular, the spaces $M_{\delta\mu}$, $M_{\{\mu\}}$ and $M_{\lgl\mu\rgl}$  are torsion free $\SA$-modules for all $\mu\in I$.
\end{lemma}

\begin{proof} If $M$ is not torsion free, then assumption (X1) implies that there is a maximal weight $\mu$ of $M$ such that $M_\mu$ is not torsion free. By the maximality of $\mu$,  the module $M_{\delta\mu}$ is torsion free. Hence so is its submodule $M_{\{\mu\}}$. From (X3a) it follows that $\im F_\mu$ is torsion free. By (X3c) the module  $M_\mu/\im F_\mu$ is free,  so $M_\mu$ must be torsion free and we have a contradiction.  
\end{proof}
In particular, the spaces $M_{\delta\mu}$ and $M_{\{\mu\}}$ are torsion free $\SA$-modules for all $\mu\in I$ if (X1) and (X3) hold.

The following results might shed some light on the assumption (X3). Define
$$
M_{\{\mu\},\max}:=\{m\in M_{\delta\mu}\mid \xi m \in M_{\{\mu\}}\text{ for some $\xi\in\SA$, $\xi\ne 0$}\}.
$$
So this is the preimage of the torsion part of $M_{\delta\mu}/M_{\{\mu\}}$ under the quotient map. 
Suppose that $N$ is a submodule of $M_{\delta\mu}$ that contains $M_{\{\mu\}}$.
Then $N/M_{\{\mu\}}$ is a torsion module if and only if $N\subset M_{\{\mu\},\max}$. In particular, condition (X3b) now reads 
$$
M_{\lgl\mu\rgl}\subset M_{\{\mu\},\max}.
$$

\begin{lemma} \label{lemma-soft} Suppose that the assumption (X3a) holds and let $\mu$ be an element in $I$. Then  the following are equivalent.
\begin{enumerate}
\item $M_\mu=\ker E_\mu\oplus\im F_\mu$.
\item $M_{\{\mu\}}=M_{\lgl\mu\rgl}$.  
\end{enumerate}
\end{lemma}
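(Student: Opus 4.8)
The plan is to observe that the statement is just a reformulation of the set-theoretic identity $E_\mu(\im F_\mu)=E_\mu(M_\mu)$ in terms of $\ker E_\mu$ and $\im F_\mu$, together with the injectivity built into (X3a). First I would unwind the definitions: by construction $M_{\lgl\mu\rgl}=E_\mu(M_\mu)$ and $M_{\lgl\mu\rgl,\min}=E_\mu(\im F_\mu)$, and since $\im F_\mu\subset M_\mu$ one always has the inclusion $M_{\lgl\mu\rgl,\min}\subset M_{\lgl\mu\rgl}$; so condition (2) says precisely that this inclusion is an equality.

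For the implication (1) $\Rightarrow$ (2) I would simply apply $E_\mu$ to the decomposition $M_\mu=\ker E_\mu\oplus\im F_\mu$: since $E_\mu$ kills the first summand, $E_\mu(M_\mu)=E_\mu(\im F_\mu)$, which is (2). Note that this direction uses neither (X3a) nor the directness of the sum.

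For (2) $\Rightarrow$ (1) I would argue as follows. Given $m\in M_\mu$, condition (2) provides $f\in\im F_\mu$ with $E_\mu(m)=E_\mu(f)$; then $m-f\in\ker E_\mu$, so $m\in\ker E_\mu+\im F_\mu$, proving $M_\mu=\ker E_\mu+\im F_\mu$. It remains to see that the sum is direct, i.e.\ $\ker E_\mu\cap\im F_\mu=\{0\}$; but this intersection is exactly the kernel of the restriction of $E_\mu$ to $\im F_\mu$, which is trivial by the injectivity asserted in (X3a). Hence $M_\mu=\ker E_\mu\oplus\im F_\mu$.

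I do not expect any genuine obstacle here: the only non-formal input is (X3a), and it enters in precisely one place, namely to turn the sum $\ker E_\mu+\im F_\mu$ into a direct sum. The content of the lemma is thus essentially a change of viewpoint between ``every $E_\mu$-image already comes from $\im F_\mu$'' and ``the obvious candidate decomposition of $M_\mu$ actually holds''.
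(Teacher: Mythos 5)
Your proof is correct and follows essentially the same argument as the paper: (1)$\Rightarrow$(2) by applying $E_\mu$ to the decomposition, and (2)$\Rightarrow$(1) by producing $m-\tilde m\in\ker E_\mu$ and invoking (X3a) for directness of the sum.
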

\begin{proof} If (1) holds, then $M_{\lgl\mu\rgl}=E_\mu(M_\mu)=E_\mu(\im F_\mu)=M_{\{\mu\}}$. Suppose that (2) holds, so $E_\mu(M_\mu)=E_\mu(\im F_\mu)$. For each $m\in M_\mu$ there exists then an element $\tilde m\in \im F_\mu$ such that $E_\mu(m)=E_\mu(\tilde m)$, hence $m-\tilde m\in\ker E_\mu$. So $M_\mu=\ker E_\mu+\im F_\mu$. But condition (X3a) reads $\ker E_\mu\cap\im F_\mu=\{0\}$. Hence $M_\mu=\ker E_\mu\oplus\im F_\mu$. 
\end{proof}

Denote by $\SK$ the quotient field of $\SA$. For an $\SA$-module $N$  let $N_\SK:=N\otimes_\SA\SK$ be the associated $\SK$-module.

\begin{lemma}\label{lemma-X3} Suppose that $M$ satisfies condition (X1). Then condition (X3) is equivalent to the following set of conditions.
\begin{enumerate}
\item $M$ is a torsion free $\SA$-module.
\item  For all $\mu \in I$ we have $(M_\mu)_\SK=(\ker E_\mu)_\SK\oplus (\im F_\mu)_\SK$.
\item  Condition (X3c) holds: $M_\mu/\im F_\mu$ is a free $\SA$-module for all $\mu\in I$.
\end{enumerate}
In particular, if $\SA=\SK$ is a field, then condition (X3) simplifies to $M_\mu=\ker E_\mu\oplus \im F_\mu$ for all $\mu\in I$. 
\end{lemma}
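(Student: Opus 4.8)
The plan is to prove the equivalence by going back and forth, exploiting the hypothesis that $W(M)$ is locally bounded from above so that each $M_{\delta\mu}$ is a finite direct sum of weight spaces and all the relevant cokernels are finitely generated. First I would establish the direction (X3)$\Rightarrow$(1),(2),(3). Condition (3) is just (X3c), so nothing to do there. For (2): tensoring the inclusions $M_{\lgl\mu\rgl,\min}\subset M_{\lgl\mu\rgl}\subset M_{\delta\mu}$ with $\SK$ and using (X3b) that the middle quotient is torsion, we get $(M_{\lgl\mu\rgl,\min})_\SK=(M_{\lgl\mu\rgl})_\SK$; then Lemma \ref{lemma-soft} applied over the field $\SK$ (noting that (X3a) passes to $\SK$ since localization is exact and preserves injectivity) gives $(M_\mu)_\SK=(\ker E_\mu)_\SK\oplus(\im F_\mu)_\SK$. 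For (1), torsion-freeness: take $m\in M_\mu$ with $\xi m=0$ for some $\xi\ne0$. By (2) over $\SK$, write the image of $m$ in $(M_\mu)_\SK$ as $k+f$ with $k\in(\ker E_\mu)_\SK$, $f\in(\im F_\mu)_\SK$; since $\xi m=0$ the image is $0$, so $k=-f$ lies in both summands, hence $k=f=0$, so $m$ is $\SK$-torsion inside $M_\mu$. To upgrade this to $m=0$ one uses a downward induction on the partial order: $E_\mu(m)\in M_{\delta\mu}$ involves only strictly larger weights, which by induction are torsion free, so $\xi E_\mu(m)=0$ forces $E_\mu(m)=0$, i.e. $m\in\ker E_\mu$; but then one argues via (X3c) and (X3a) that the torsion of $M_\mu$ injects into the torsion of $M_\mu/\im F_\mu$, which is free, hence zero. (The base case is a maximal weight, where $M_{\delta\mu}=0$ and the claim is immediate from (X3c).)

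For the converse (1),(2),(3)$\Rightarrow$(X3), condition (X3c) is (3). For (X3a), I would show $\ker E_\mu\cap\im F_\mu=\{0\}$: an element in the intersection maps to $0$ in $(M_\mu)_\SK$ under the projection onto $(\ker E_\mu)_\SK$ along $(\im F_\mu)_\SK$ forced by (2)... more carefully, it lies in $(\ker E_\mu)_\SK\cap(\im F_\mu)_\SK=\{0\}$ inside $(M_\mu)_\SK$, so it is $\SK$-torsion in $M_\mu$, hence zero by (1). For (X3b), I need $M_{\lgl\mu\rgl}/M_{\lgl\mu\rgl,\min}$ torsion, equivalently $(M_{\lgl\mu\rgl})_\SK=(M_{\lgl\mu\rgl,\min})_\SK$. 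Using torsion-freeness of $M$ (so that $(M_\mu)_\SK\supset M_\mu$ etc.) together with (2), we get $(M_\mu)_\SK=(\ker E_\mu)_\SK\oplus(\im F_\mu)_\SK$, hence $E_\mu((M_\mu)_\SK)=E_\mu((\im F_\mu)_\SK)$, which says exactly $(M_{\lgl\mu\rgl})_\SK=(M_{\lgl\mu\rgl,\min})_\SK$. Finally the last sentence, the case $\SA=\SK$: then (1) and (3) are automatic (every module over a field is torsion free and free), (2) becomes literally $M_\mu=\ker E_\mu\oplus\im F_\mu$, and this is equivalent to the full (X3) by what we have shown — or directly by Lemma \ref{lemma-soft}.

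I expect the main obstacle to be the torsion-freeness upgrade in (X3)$\Rightarrow$(1): passing from ``$\SK$-torsion element'' to ``zero'' genuinely uses the interplay of (X3a), (X3c) and the grading, and one must set up the induction on weights cleanly, making sure the local boundedness hypothesis is what guarantees the induction is well-founded (every nonempty subset of $W(M)$ above a fixed weight is finite, so has maximal elements) and that $M_{\delta\mu}$ only sees strictly larger weights. A secondary subtlety is checking that (X3a) is preserved under $-\otimes_\SA\SK$, i.e. that injectivity of $E_\mu|_{\im F_\mu}$ survives localization; this is standard (localization is flat) but should be stated. Everything else is formal manipulation of exact sequences and the definitions of $M_{\lgl\mu\rgl}$ and $M_{\lgl\mu\rgl,\min}$.
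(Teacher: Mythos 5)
Your proof is correct and its overall structure matches the paper's; the one place you take a genuinely different route is the torsion-freeness step in (X3)$\Rightarrow$(1). You set up a downward induction and, given a torsion element $m\in M_\mu$, first deduce $m\in\ker E_\mu$ (because $E_\mu(m)$ is torsion in $M_{\delta\mu}$, which is already known to be torsion free), then use (X3c) to push $m$ into $\im F_\mu$, and finally invoke (X3a) to land in $\ker E_\mu\cap\im F_\mu=\{0\}$. The paper's version is shorter and avoids both the detour through $\ker E_\mu$ and any appeal to the decomposition (2): it picks a maximal weight $\mu$ at which torsion-freeness fails, notes via (X3c) that the torsion of $M_\mu$ must sit inside $\im F_\mu$, and then uses (X3a) to embed $\im F_\mu$ into $M_{\delta\mu}$, which is torsion free by maximality, giving an immediate contradiction. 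Both work, but you should tighten the phrase ``the torsion of $M_\mu$ injects into the torsion of $M_\mu/\im F_\mu$'': that injectivity (on $\ker E_\mu$) is precisely what (X3a) supplies, and you need the prior step $m\in\ker E_\mu$ to use it. Your derivation of (2) by applying Lemma \ref{lemma-soft} to $M_\SK$ is a tidy repackaging of what the paper does by hand from (X3a) and (X3b); both produce the same splitting. The converse direction and the field case are handled exactly as in the paper.
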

\begin{proof} Suppose that (X3) is satisfied. We have already shown in Lemma \ref{lemma-torfree} that (X1) and (X3) imply that $M$ is torsion free as an $\SA$-module.  Moreover, (X3a) says that $\ker E_\mu\cap \im F_\mu=\{0\}$. Now let $m\in M_\mu$. Then, by (X3b), there exists an element $\xi\in\SA$, $\xi\ne 0$ and $m^\prime\in \im F_\mu$ such that $\xi E_\mu(m)=E_\mu(m^\prime)$. So $\xi m-m^\prime$ is contained in the kernel of $E_\mu$ and we deduce $(M_\mu)_\SK=(\ker E_\mu)_\SK+(\im F_\mu)_\SK$. The last two results say that $(M_\mu)_\SK=(\ker E_\mu)_\SK\oplus (\im F_\mu)_\SK$. Hence (1), (2) and (3) are satisfied. 

Now assume that (1), (2) and (3) hold. As $M$ is torsion free we can view it as a subspace in $M_\SK$. Hence $(M_\mu)_\SK=(\ker E_\mu)_\SK\oplus (\im F_\mu)_\SK$ implies that $E_\mu|_{\im F_\mu}$ is injective, i.e.~(X3a). It also implies that 
$E_\mu(\im F_\mu)_\SK=E_\mu(M_\mu)_\SK$, i.e.~$(M_{\{\mu\}})_\SK=(M_{\lgl\mu\rgl})_\SK$. Hence  the cokernel of the inclusion $M_{\{\mu\}}\subset M_{\lgl\mu\rgl}$  is a torsion module, so (X3b) holds. That (X3c) holds is the assumption (3). 
\end{proof}

\subsection{The category $\CX$} Now we are ready to define our auxiliary category. 
Let $I$ be a closed subset of $X$.
\begin{definition} The category $\CX_{\SA,I}$ is defined as follows. Objects are $I$-graded $\SA$-modules $M=\bigoplus_{\mu\in I}M_\mu$ endowed with $\SA$-linear homomorphisms $F_{\mu,\alpha,n}\colon M_{\mu+n\alpha}\to M_\mu$ and $E_{\mu,\alpha,n}\colon M_\mu\to M_{\mu+n\alpha}$ for all $\mu\in I$, $\alpha\in\Pi$ and $n>0$ such that conditions (X1), (X2) and (X3) are satisfied. A morphism $f\colon M\to N$  in $\CX_{\SA,I}$ is a collection of $\SA$-linear homomorphisms $f_\mu\colon M_\mu\to N_\mu$ for all $\mu\in I$ such that the diagrams

\centerline{
\xymatrix{
M_{\mu+n\alpha}\ar[r]^{f_{\mu+n\alpha}}\ar[d]_{F^M_{\alpha,n}}&N_{\mu+n\alpha}\ar[d]^{F^N_{\alpha,n}}\\
M_\mu\ar[r]^{f_\mu}&N_\mu
}
\quad\quad
\xymatrix{
M_{\mu+n\alpha}\ar[r]^{f_{\mu+n\alpha}}&N_{\mu+n\alpha}\\
M_{\mu}\ar[u]^{E^M_{\alpha,n}}\ar[r]^{f_{\mu+n\alpha}}&N_{\mu}\ar[u]_{E^N_{\alpha,n}}
}
}
\noindent
commute for all $\mu\in I$, $\alpha\in\Pi$ and $n>0$.
\end{definition}
If the ground ring is determined from the context, we write $\CX_I$ instead of $\CX_{\SA,I}$. We also write $\CX$ or $\CX_\SA$ for the ``global''  category $\CX_{\SA,X}$. 
 
\begin{remark}\label{rem-mordelta}
If $M$ and $N$ are objects in $\CX_I$ and $f=\{f_\mu\colon M_\mu\to N_\mu\}_{\mu\in I}$ is a collection of homomorphisms, then we denote  by $f_{\delta\mu}\colon M_{\delta\mu}\to N_{\delta\mu}$ the diagonal matrix with entries $f_{\mu+n\alpha}$. Then $f$ is a morphism in $\CX_I$ if and only if for all $\mu\in I$ the diagrams

\centerline{
\xymatrix{
M_{\delta\mu}\ar[d]_{F^M_\mu}\ar[r]^{f_{\delta\mu}}&N_{\delta\mu}\ar[d]^{F^N_\mu}\\
M_\mu\ar[r]^{f_\mu}& N_\mu
}
\quad\quad
\xymatrix{
M_{\delta\mu}\ar[r]^{f_{\delta\mu}}&N_{\delta\mu}\\
M_\mu\ar[u]^{E^M_\mu}\ar[r]^{f_\mu}& N_\mu\ar[u]_{E^N_\mu}
}
}
\noindent commute.
\end{remark}

\section{Extending morphisms}
We retain the notations of the previous section. 
Let $I^\prime\subset I$ be closed subsets of $X$ and let $M$ be an object in $\CX_I$. We define $M_{I^\prime}:=\bigoplus_{\mu\in I^\prime} M_\mu$ and endow it with the homomorphisms $E_{\mu,\alpha,n}$ and $F_{\mu,\alpha,n}$ for all $\mu\in I^\prime$. Then one easily checks that the properties (X1), (X2) and (X3) are preserved, so this defines an object $M_{I^\prime}$  in $\CX_{I^\prime}$. For a morphism $f\colon M\to N$ we obtain a morphism $f_{I^\prime}\colon M_{I^\prime}\to N_{I^\prime}$ by restriction, and this yields a functor
$$
(\cdot)_{I^\prime}\colon \CX_I\to \CX_{I^\prime}
$$
that we call a {\em restriction functor}.

\subsection{The existence of extensions of morphisms}

The following proposition is a cornerstone of the approach outlined in this article. Its proof is not difficult, but lengthy.  

\begin{proposition} \label{prop-mainext}  Let $I^\prime$ be a closed subset of $X$ and suppose that $\mu\not\in I^\prime$ is such that $I:=I^\prime\cup\{\mu\}$ is also closed. Let $M$ and $N$ be objects in $\CX_I$, and let $f^\prime\colon M_{I^\prime}\to N_{I^\prime}$ be a morphism in $\CX_{I^\prime}$.
\begin{enumerate}
\item There exists a unique $\SA$-linear homomorphism $\tilde f_\mu\colon \im F^M_\mu\to N_\mu$ such that the diagrams 

\centerline{
\xymatrix{
M_{\delta\mu}\ar[d]_{F_\mu^M}\ar[r]^{f^\prime_{\delta\mu}}&N_{\delta\mu}\ar[d]^{F_\mu^N}\\
\im F_\mu^M\ar[r]^{\tf_\mu}& N_\mu
}
\quad\quad
\xymatrix{
M_{\delta\mu}\ar[r]^{f^\prime_{\delta\mu}}&N_{\delta\mu}\\
\im F_\mu^M\ar[u]^{E_\mu^M}\ar[r]^{\tf_\mu}& N_\mu\ar[u]_{E_\mu^N}
}
}
\noindent commute. In particular, $f^\prime_{\delta\mu}$ maps $M_{\{\mu\}}$ into $N_{\{\mu\}}$.
\item The following are equivalent.
\begin{enumerate}
\item  There exists a morphism $f\colon M\to N$ in $\CX_I$ such that $f_{I^\prime}=f^\prime$.
\item The homomorphism $f^\prime_{\delta\mu}\colon M_{\delta\mu}\to N_{\delta\mu}$ maps $M_{\lgl\mu\rgl}$ into $N_{\lgl\mu\rgl}$.
\end{enumerate}
\end{enumerate}
\end{proposition}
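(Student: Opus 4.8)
The plan is to build the extension $f$ weight space by weight space: since $I = I' \cup \{\mu\}$ with $\mu \notin I'$, the only datum missing from $f'$ is the component $f_\mu \colon M_\mu \to N_\mu$, together with the requirement that $f_\mu$ be compatible with all the $E$- and $F$-maps into and out of $M_\mu$. Because $I$ is closed and $\mu \notin I'$, every weight of the form $\mu + n\alpha$ with $n > 0$ lies in $I'$, so $M_{\delta\mu}$ and $N_{\delta\mu}$ and the homomorphism $f'_{\delta\mu}$ are already defined, and by Remark~\ref{rem-mordelta} (applied on $I'$) the relevant squares for $f'$ over $I'$ commute. The compatibility conditions that a candidate $f_\mu$ must satisfy are exactly, again by Remark~\ref{rem-mordelta}, the commutativity of the two squares relating $E_\mu, F_\mu$ to $E_\mu^N, F_\mu^N$ through $f'_{\delta\mu}$ and $f_\mu$; so part~(2) is really the statement of when such an $f_\mu$ exists, and part~(1) is the construction of the "forced" value of $f_\mu$ on the subspace $\im F_\mu^M$.

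For part~(1): define $\tilde f_\mu$ on $\im F_\mu^M$ by $\tilde f_\mu(F_\mu^M(x)) := F_\mu^N(f'_{\delta\mu}(x))$ for $x \in M_{\delta\mu}$. The content here is well-definedness: if $F_\mu^M(x) = F_\mu^M(x')$ we must show $F_\mu^N(f'_{\delta\mu}(x)) = F_\mu^N(f'_{\delta\mu}(x'))$. The key is condition (X3a), which says $E_\mu^M$ restricted to $\im F_\mu^M$ is injective; so it suffices to check that $E_\mu^N F_\mu^N f'_{\delta\mu}(x)$ depends only on $F_\mu^M(x)$. Now $E_\mu^N F_\mu^N$ and $E_\mu^M F_\mu^M$ are matrices of operators $E_{\alpha,m} F_{\beta,n}$, and condition (X2) rewrites each such composite as a sum of terms $F_{\alpha,n-r} E_{\alpha,m-r}$ (or $F_{\beta,n} E_{\alpha,m}$ when $\alpha \ne \beta$) — crucially with the $E$'s on the \emph{right}, i.e. starting from $M_\mu$ resp.\ $N_\mu$. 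Feeding $F_\mu^M(x) \in M_\mu$ into this expression and using that $f'$ intertwines all the $E$- and $F$-maps among weights in $I'$ together with the single map we are defining, one gets $E_\mu^N F_\mu^N f'_{\delta\mu}(x) = (f')_{\delta\mu}^{\,\prime\prime}$-image of $E_\mu^M F_\mu^M(x)$ for the appropriate diagonal $f'$-matrix — in particular it factors through $F_\mu^M(x)$. Hence $\tilde f_\mu$ is well-defined; that it then makes both diagrams in~(1) commute is immediate (the left one by definition, the right one by the (X2)-computation just performed, or more directly by injectivity of $E_\mu^M|_{\im F_\mu^M}$ again). Uniqueness is clear since the left diagram forces the value on all of $\im F_\mu^M$.

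For part~(2), the implication (a)$\Rightarrow$(b) is easy: if $f \colon M \to N$ restricts to $f'$, then by Remark~\ref{rem-mordelta} the square with $E_\mu$ commutes, so $f'_{\delta\mu}(M_{\lgl\mu\rgl}) = f'_{\delta\mu}(E_\mu^M(M_\mu)) = E_\mu^N(f_\mu(M_\mu)) \subseteq E_\mu^N(N_\mu) = N_{\lgl\mu\rgl}$. For (b)$\Rightarrow$(a), assume $f'_{\delta\mu}(M_{\lgl\mu\rgl}) \subseteq N_{\lgl\mu\rgl}$. By (X3a) for $N$, the map $E_\mu^N \colon N_\mu \to N_{\delta\mu}$ restricted to $\im F_\mu^N$ is injective onto $N_{\lgl\mu\rgl,\min}$, but we need a section defined on the possibly larger $N_{\lgl\mu\rgl} = E_\mu^N(N_\mu)$. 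The point is that $E_\mu^N$ maps $N_\mu$ \emph{onto} $N_{\lgl\mu\rgl}$ by definition, so for $m \in M_\mu$ pick any preimage: choose $f_\mu(m) \in N_\mu$ with $E_\mu^N(f_\mu(m)) = f'_{\delta\mu}(E_\mu^M(m))$, which is possible by hypothesis~(b). To make this canonical and $\SA$-linear, use condition (X3c): $M_\mu / \im F_\mu^M$ is free, so choose an $\SA$-linear splitting $M_\mu = \im F_\mu^M \oplus C$; on $\im F_\mu^M$ set $f_\mu := \tilde f_\mu$ from part~(1); on a basis of the free module $C$ choose preimages under $E_\mu^N$ as above (well-defined since $E_\mu^N(\im F_\mu^N) = N_{\lgl\mu\rgl,\min}$ only controls the ambiguity, which we absorb), and extend $\SA$-linearly. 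One then checks the two squares of Remark~\ref{rem-mordelta} at $\mu$ commute: the $E$-square holds by construction on $C$ and by part~(1) on $\im F_\mu^M$; the $F$-square holds on $\im F_\mu^M$ by part~(1), and in general because $F_\mu^M(M_{\delta\mu}) = \im F_\mu^M$ — so applying $F_\mu^M$ always lands where $\tilde f_\mu$ already controls things, and one reduces to: $f_\mu(F_\mu^M(x)) = \tilde f_\mu(F_\mu^M(x)) = F_\mu^N(f'_{\delta\mu}(x)) = F_\mu^N(f'_{\delta\mu}$ of whatever represents $x)$, which is exactly commutativity of the $F$-square. All squares at weights in $I'$ already commute since $f_{I'} = f'$, and squares mixing $\mu$ with $I'$ are precisely the two we just checked. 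Hence $f = (f', f_\mu)$ is a morphism in $\CX_I$ restricting to $f'$.

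I expect the main obstacle to be the well-definedness argument in part~(1): one must carefully expand $E_\mu^M F_\mu^M$ and $E_\mu^N F_\mu^N$ as matrices, apply (X2) entry by entry to push all $E$-operators to the right of all $F$-operators, and then verify that $f'$ — which a priori only intertwines maps among weights in $I'$ — suffices to transport the resulting expression, using that the ``outermost'' map applied is always an $E$ out of $M_\mu$ whose image lies in $I'$ where $f'$ is defined. Getting the bookkeeping of indices (the sums over $r$, the pairs $(\alpha,\beta)$) right, while keeping track of which maps $f'$ already intertwines, is the delicate part; everything else is formal diagram-chasing combined with the freeness in (X3c) to produce an honest $\SA$-linear section.
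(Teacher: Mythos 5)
Your proposal is correct and takes essentially the same approach as the paper: define $\tf_\mu$ on $\im F^M_\mu$ by $\tf_\mu(F^M_\mu(x))=F^N_\mu(f^\prime_{\delta\mu}(x))$, prove well-definedness by expanding $E_\mu F_\mu$ via (X2) (whose terms involve only weights in $\delta\mu\subset I^\prime$, so they are transported by $f^\prime$) and then invoking (X3a) applied to $N$, and extend to all of $M_\mu$ across a free complement of $\im F^M_\mu$ supplied by (X3c). The paper packages the (X2) computation in auxiliary maps $\hE_{\alpha,m}$, $\hF_{\beta,n}$ on $\hM_\mu$ and cites projectivity of the complement $D$ directly; the only slips in your sketch are that the injectivity you need at the crucial step is (X3a) for $N$, not for $M$, and that after the (X2) rewriting the rightmost $E$-maps start from $M_{\mu+n\beta}$ and land in $\delta\mu$ (they never pass through $M_\mu$) -- which is precisely why $f^\prime$ alone suffices to intertwine the whole composite.
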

\begin{proof} First we prove part (1). 
Set $\hM_\mu:=\bigoplus_{\alpha\in\Pi,n>0} M_{\mu+n\beta}$ and denote by $\hF_{\beta,n}\colon M_{\mu+n\beta}\to \hM_\mu$ the embedding of the corresponding direct summand. Define $\hF_\mu\colon M_{\delta\mu}\to \hM_\mu$ as the row vector with entries $\hF_{\beta,n}$\footnote{The author is aware of the fact that this looks rather silly. There is a tautological identification $M_{\delta\mu}=\widehat M_\mu$ that identifies $\hF_\mu$ with the identity. However, $M_{\delta\mu}$ and $\widehat M_\mu$ will play very different roles in the following.}. For $\alpha\in\Pi$, $m>0$  define an $\SA$-linear map $\widehat E_{\alpha,m}\colon \hM_\mu\to M_{\mu+m\alpha}$ by additive extension of the following formulas. For $\beta\in\Pi$, $n>0$ and $v\in M_{\mu+n\beta}$ set
$$
\widehat E_{\alpha,m}\widehat F_{\beta,n}(v):=
\begin{cases}
F_{\beta,n}E_{\alpha,m}(v),&\text{ if $\alpha\ne\beta$},\\
\sum_{0\le r\le \min(m,n)} \qchoose{\lgl\mu,\alpha^\vee\rgl+n+m}{ r}_{d_\alpha} F_{\alpha,n-r}E_{\alpha,m-r}(v), &\text{ if $\alpha=\beta$}.
\end{cases}
$$
(Note that, in contrast to the definition in (X2), we have $v\in M_{\mu+n\beta}$, hence the $+$ in front of $n$.)
Let $\hE_\mu\colon \hM_\mu\to M_{\delta\mu}$ be the column vector with entries $\hE_{\alpha,m}$. 

Now define $\phi\colon \hM_\mu\to M_\mu$ as the row vector with entries $F_{\alpha,n}$.  Obviously, the diagram

\centerline{
\xymatrix{
&M_{\delta\mu}\ar[dl]_{\hF_\mu}\ar[dr]^{F_\mu}&\\
\hM_\mu\ar[rr]^\phi&& M_\mu
}
}
\noindent
commutes. As the $\hE_{\alpha,m}$- and $\hF_{\beta,n}$-maps satisfy the same commutation relations as the $E_{\alpha,m}$- and $F_{\beta,n}$-maps by (X2), and as $\hF_{\mu}$ is surjective, also the diagram

\centerline{
\xymatrix{
&M_{\delta\mu}&\\
\hM_\mu\ar[rr]^\phi\ar[ur]^{\hE_\mu}&& M_\mu\ar[ul]_{E_\mu}
}
}
\noindent
commutes. 
 As $\hF_\mu$ is surjective, we have $\im\phi=\im F_\mu$. As $E_\mu$ is injective when restricted to $\im F_\mu$, we deduce that $\ker\phi=\ker \hE_\mu$, hence $\phi$ induces an isomorphism $\hM_\mu/\ker \hE_\mu\cong\im F_\mu$.

Now let  $\hf_\mu\colon \hM_\mu\to N_\mu$ be the row vector with entries $F^N_{n,\beta}\circ f^\prime_{\mu+n\beta}\colon M_{\mu+n\beta}\to N_{\mu+n\beta}\to N_\mu$. Then   the diagram
 
\centerline{
\xymatrix{
M_{\delta\mu}\ar[d]_{\hF_\mu}\ar[r]^{f^\prime_{\delta\mu}}&N_{\delta\mu}\ar[d]^{F^N_\mu}\\
\hM_\mu\ar[r]^{\hf_\mu}& N_\mu
}
}
\noindent commutes. By the same arguments as above, also the diagram

\centerline{ 
\xymatrix{
M_{\delta\mu}\ar[r]^{f^\prime_{\delta\mu}}&N_{\delta\mu}\\
\hM_\mu\ar[u]^{\hE_\mu}\ar[r]^{\hf_\mu}& N_\mu\ar[u]_{E^N_\mu}
}
}
\noindent commutes.
As $\hF_\mu$ is surjective, the image of $\hf_\mu$ is contained in $\im F_\mu^N\subset N_\mu$. As $E_\mu^N$ is injective when restricted to $\im F_\mu^N$, we deduce that $\hf_\mu$ factors over the kernel of $\hE_\mu$. But, as we have seen above, this is the kernel of $\phi$. We hence obtain an induced homomorphism $\tf_\mu\colon \im F^M_\mu\cong\hM_\mu/\ker\phi\to N_\mu$ such that the diagrams

\centerline{
\xymatrix{
M_{\delta\mu}\ar[d]_{F_\mu^M}\ar[r]^{f^\prime_{\delta\mu}}&N_{\delta\mu}\ar[d]^{F^N_\mu}\\
\im F_\mu^M\ar[r]^{\tf_\mu}& N_\mu
}
\quad\quad
\xymatrix{
M_{\delta\mu}\ar[r]^{f^\prime_{\delta\mu}}&N_{\delta\mu}\\
\im F_\mu^M\ar[u]^{E^M_\mu}\ar[r]^{\tf_\mu}& N_\mu\ar[u]_{E^N_\mu}
}
}
\noindent commute. This shows the existence part of (1). The uniqueness is clear, as $F_\mu^M \colon M_{\delta\mu}\to \im F_\mu^M$ is surjective. 

Now we show part (2). Assume that property (a) holds, i.e.~there exists a homomorphism $f\colon M\to N$ that restricts to $f^\prime$. Then the diagram
 
\centerline{ 
\xymatrix{
M_{\delta\mu}\ar[rr]^{f^\prime_{\delta\mu}=f_{\delta\mu}}&&N_{\delta\mu}\\
M_\mu\ar[u]^{E^M_\mu}\ar[rr]^{f_\mu}&& N_\mu\ar[u]_{E^N_\mu}
}
}
\noindent commutes and hence $f^\prime_{\delta\mu}$ maps $M_{\lgl\mu\rgl}=E^M_\mu(M_\mu)$ into $N_{\lgl\mu\rgl}=E^N_\mu(N_\mu)$, so property (b) holds.

Now assume property (b) holds. We now need to construct an $\SA$-linear map $f_\mu\colon M_\mu\to N_\mu$ such that the diagrams

\begin{equation}\label{eq1}
\begin{gathered}
\xymatrix{
M_{\delta\mu}\ar[d]_{F^M_\mu}\ar[r]^{f^\prime_{\delta\mu}}&N_{\delta\mu}\ar[d]^{F_\mu^N}\\
M_\mu\ar[r]^{f_\mu}& N_\mu
}
\quad\quad
\xymatrix{
M_{\delta\mu}\ar[r]^{f^\prime_{\delta\mu}}&N_{\delta\mu}\\
M_\mu\ar[u]^{E_\mu^M}\ar[r]^{f_\mu}& N_\mu\ar[u]_{E_\mu^N}
}
\end{gathered}
\end{equation}
commute. By part (1), there exists a homomorphism $\tf_\mu\colon \im F_\mu^M\to N_\mu$ such that the diagrams

$$
\begin{gathered}
\xymatrix{
M_{\delta\mu}\ar[d]_{F_\mu^M}\ar[r]^{f^\prime_{\delta\mu}}&N_{\delta\mu}\ar[d]^{F_\mu^N}\\
\im F_\mu^M\ar[r]^{\tf_\mu}& N_\mu
}
\quad\quad
\xymatrix{
M_{\delta\mu}\ar[r]^{f^\prime_{\delta\mu}}&N_{\delta\mu}\\
\im F_\mu^M\ar[u]^{E_\mu^M}\ar[r]^{\tf_\mu}& N_\mu\ar[u]_{E_\mu^N}
}
\end{gathered}
$$
commute. By assumption, the quotient $M_\mu/\im F^M_\mu$ is a free $\SA$-module. We can hence fix a decomposition $M_\mu=\im F^M_\mu\oplus D$ with a free $\SA$-module $D$. We now construct a homomorphism $\hf_\mu\colon D\to N_\mu$ in such a way that  $f_\mu:=(\tf_\mu,\hf_\mu)$ serves our purpose. Note that no matter how we define $\hf_\mu$, we will always have $f_\mu\circ F_\mu^M=F_\mu^N\circ f_{\delta_\mu}^\prime$ (cf.~the left diagram in (\ref{eq1})). So the only property that $\hf_\mu$ has to satisfy is that the diagram

\centerline{
\xymatrix{
M_{\delta\mu}\ar[r]^{f^\prime_{\delta\mu}}&N_{\delta\mu}\\
D\ar[u]^{E_\mu^M|_D}\ar[r]^{\hf_\mu}& N_\mu\ar[u]_{E_\mu^N}
}
}
\noindent commutes. Since we assume that $f^\prime_{\delta\mu}(E_\mu^M(M_{\mu}))$ is contained in the image of $E^N_\mu\colon N_\mu\to N_{\delta\mu}$, this also holds for $f^\prime_{\delta\mu}(E^M_\mu(D))$.  As $D$ is free, it is projective as an $\SA$-module. So $\hf_\mu$ indeed exists.
\end{proof}
\begin{remark} In part (2) of the lemma above, the extension $f$ of $f^\prime$ is in general not unique. In the notation of the proof of part (2), the $\SA$-linear homomorphism $\hf_\mu$ is in general not unique, nor is the decomposition $M_\mu=\im F_\mu^M\oplus D$.
\end{remark}

\subsection{Soft,  saturated and standard objects}
 Let $I$ be a closed subset of $X$, let  $M$ be an object in $\CX_I$. Let $\mu\in I$. We denote by $\SA^{\irr}$ the set of irreducible elements in $\SA$. For any non-empty subset $\Gamma$ of $\SA^{\irr}$ we define
$$
M_{\{\mu\},\Gamma}:=\{m\in M_{\delta\mu}\mid \xi m \in M_{\{\mu\}}\text{ for a product $\xi$ of elements in $\Gamma$}\}.
$$
It is convenient to set $M_{\{\mu\},\emptyset}=M_{\{\mu\}}$. 
For $\Gamma\subset\Gamma^\prime\subset\SA^{\irr}$ we then  have 
$$
M_{\{\mu\}}\subset M_{\{\mu\},\Gamma}\subset M_{\{\mu\},\Gamma^\prime}\subset M_{\delta\mu},
$$
and our earlier definition reads $M_{\{\mu\},\max}:=M_{\{\mu\},\SA^{\irr}}$.

 \begin{definition} $M$ is called 
\begin{enumerate}
\item 
 {\em $\Gamma$-soft}, if  for all $\mu\in I$ we have $M_{\lgl\mu\rgl}\subset M_{\{\mu\},\Gamma}$.
 \item 
 {\em $\Gamma$-saturated}, if  for all $\mu\in I$ we have $M_{\{\mu\},\Gamma}\subset M_{\lgl\mu\rgl}$.
 \item {\em $\Gamma$-standard},  if  for all $\mu\in I$ we have $M_{\lgl\mu\rgl}=M_{\{\mu\},\Gamma}$.
 \end{enumerate}
\end{definition}
In particular, each object in $\CX_I$ is $\SA^{\irr}$-soft and $\emptyset$-saturated. The following result  might explain the terminology. 
\begin{lemma}\label{lemma-softextmor} Suppose that  $I^\prime\subset X$ is closed and that $\mu\not\in I^\prime$ is such that $I:=I^\prime\cup\{\mu\}$ is closed in $X$ as well. 
 Let $M$ and $N$ be objects in $\CX_I$ and suppose that there exists a  subset $\Gamma$ of $\SA^{\irr}$ such that $M_{\lgl\mu\rgl}\subset M_{\{\mu\},\Gamma}$ and $N_{\{\mu\},\Gamma}\subset N_{\lgl\mu\rgl}$.  Then the functorial map
$$
\Hom_{\CX_I}(M,N)\to \Hom_{\CX_{I^\prime}}(M_{I^\prime},N_{I^\prime})
$$
is surjective. 
\end{lemma}

\begin{proof} Let $f^\prime\colon M_{I^\prime}\to N_{I^\prime}$ be a morphism in $\CX_{I^\prime}$.  By Proposition \ref{prop-mainext}, there exists a (unique) $\tf_\mu\colon \im F^M_\mu\to N_\mu$ such that the diagrams 

\centerline{
\xymatrix{
M_{\delta\mu}\ar[d]_{F_\mu^M}\ar[r]^{f^\prime_{\delta\mu}}&N_{\delta\mu}\ar[d]^{F_\mu^N}\\
\im F_\mu^M\ar[r]^{\tf_\mu}& N_\mu
}
\quad\quad
\xymatrix{
M_{\delta\mu}\ar[r]^{f^\prime_{\delta\mu}}&N_{\delta\mu}\\
\im F_\mu^M\ar[u]^{E_\mu^M}\ar[r]^{\tf_\mu}& N_\mu\ar[u]_{E_\mu^N}
}
}
\noindent commute. This implies that  $f^\prime_{\delta\mu}$ maps $M_{\{\mu\}}$  into $N_{\{\mu\}}$ and hence $M_{\{\mu\},\Gamma}$  into $N_{\{\mu\},\Gamma}$. Our assumptions now imply that $f^\prime_{\delta\mu}$ maps  $M_{\lgl\mu\rgl}$ into $N_{\lgl\mu\rgl}$, so the condition (2b)  in Proposition \ref{prop-mainext} is satisfied. Hence there exists an extension $f\colon M\to N$ of $f^\prime$. 
\end{proof}

\section{Extending objects}\label{sec-ExtObj}
Again we retain all notations. In the last section we studied assumptions that ensure that morphisms in $\CX$ can be extended. In this section we want to extend objects: given a subset   $I^\prime$ of  $I$ and an object $M^\prime$  in $\CX_{I^\prime}$ we want to find an object $M$ such that $M_{I^\prime}$ is isomorphic to $M^\prime$. Such extensions always exist, and we can even order them according to the torsion type of the quotients $M_{\lgl\mu\rgl}/M_{\{\mu\}}$ for all $\mu\in I\setminus I^\prime$. For any subset $\Gamma$ of $\SA^{\irr}$ we are going to construct a minimal extension of $M^\prime$ that ``has no $\Gamma$-torsion'' at all weights in $I\setminus I^\prime$, i.e.~that satisfies $M_{\{\mu\},\Gamma}\subset M_{\lgl\mu\rgl}$ for all $\mu\in I\setminus I^\prime$.

\subsection{The $\emptyset$-extension}\label{subsec-minext}  Let  $I^\prime$ be a closed subset of $X$ and assume that $\mu\in X\setminus I^\prime$ is such that $I:=I^\prime\cup\{\mu\}$ is closed in $X$ again. We start with extending the object $M^\prime$ in the extreme case $\Gamma=\emptyset$. 
 
\begin{proposition}\label{prop-softextobj} Let $M^\prime$ be an object in $\CX_{I^\prime}$. 
\begin{enumerate}
\item There exists an up to isomorphism unique object $M$ in $\CX_I$ with the following properties.
\begin{enumerate}
\item The object $M$ restricts to $M^\prime$, i.e.~$M_{I^\prime}\cong M^\prime$. 
\item For all objects $N$ in $\CX_I$  the functorial homomorphism
$$
\Hom_{\CX_I}(M,N)\to \Hom_{\CX_{I^{\prime}}}(M_{I^{\prime}},N_{I^{\prime}})
$$
is an isomorphism. 
\end{enumerate}
\item For the object $M$ characterized in part (1) we have $M_{\{\mu\}}=M_{\lgl\mu\rgl}$.
\end{enumerate}
\end{proposition}

\begin{proof} Note that the uniqueness statement in (1) follows directly  from properties (1a) and  (1b).  So, in order to prove (1), we   only need to show the existence of $M$.  For this we  give an explicit construction. First, we  set $M_\nu=M^\prime_\nu$, $E_{\nu,\alpha,n}^{M}=E_{\nu,\alpha,n}^{M^\prime}$ and $F_{\nu,\alpha,n}^{M}=F_{\nu,\alpha,n}^{M^\prime}$ for all $\nu\in I^\prime$, $\alpha\in\Pi$ and $n>0$ in order to make sure that (1a) is satisfied.  
Then we can already define $M_{\delta\mu}:=\bigoplus_{\alpha\in\Pi, n>0} M_{\mu+n\alpha}$. For the construction of $M_\mu$ and $E_{\mu,\alpha,n}$ and $F_{\mu,\alpha,n}$ we follow ideas that were already used  in the proof of Proposition \ref{prop-mainext}. So in a first step we  set $\hM_\mu:=\bigoplus_{\beta\in\Pi,n>0} M_{\mu+n\beta}$ and denote by $\hF_{\beta,n}\colon M_{\mu+n\beta}\to \hM_{\mu}$ the canonical injection of a direct summand. We let $\hF_\mu\colon M_{\delta\mu}\to\hM_\mu$ be the row vector with entries $\hF_{\beta,n}$.  For $\alpha\in\Pi$, $m>0$  define an $\SA$-linear map $\widehat E_{\alpha,m}\colon \hM_\mu\to M_{\mu+m\alpha}$ by additive extension of the following formulas. For $\beta\in\Pi$, $n>0$ and $v\in M_{\mu+n\beta}$ set
$$
\widehat E_{\alpha,m}\widehat F_{\beta,n}(v):=
\begin{cases}
F_{\beta,n}E_{\alpha,m}(v),&\text{ if $\alpha\ne\beta$},\\
\sum_{0\le r\le \min(m,n)} \qchoose{\lgl\mu,\alpha^\vee\rgl+n+m}{ r} F_{\alpha,n-r}E_{\alpha,m-r}(v), &\text{ if $\alpha=\beta$}.
\end{cases}
$$
We denote by $\hE_\mu\colon \hM_\mu\to M_{\delta\mu}$ the column vector with entries $\hE_{\alpha,n}$. 
Now define $M_\mu:=\hM_\mu/\ker \hE_\mu$, and denote by $E_\mu\colon M_\mu\to M_{\delta\mu}$ and $F_\mu\colon M_{\delta\mu}\to M_\mu$ the homomorphisms induced by $\hE_\mu$ and $\hF_\mu$, resp. Denote by $E_{\mu,\alpha,n}\colon M_\mu\to M_{\mu+n\alpha}$ and by $F_{\mu,\alpha,n}\colon M_{\mu+n\alpha}\to M_\mu$ the entries of the row vector $E_\mu$ and the column vector $F_\mu$, resp. We now claim that the above data  yields  an object in $\CX_I$. Clearly, property (X1) is satisfied. Also, the commutation relations between the $E$- and $F$-maps follow from the resp. relations satisfied by $M^\prime$ and the construction of  $E_\mu$ and $F_\mu$. Hence (X2) is satisfied as well. The properties (X3) are satisfied for all weights $\nu$ with $\nu\ne\mu$, as they are satisfied for $M^\prime$. For the weight $\mu$, however, we have  $\ker E_\mu=\{0\}$, hence (X3a) is satisfied, and $M_\mu=\im F_\mu$, so $M_{\lgl\mu\rgl}=M_{\{\mu\}}$, which imply (X3b) and (X3c). 

It remains to show that the object $M$ satisfies the properties (1a) and (1b). Part (1a) is clear from the construction. Part (1b) follows from $M_\mu=\im F_\mu$ and part (1) of Proposition  \ref{prop-mainext}.  Hence (1) is proven. Since $M_{\mu}=\im F_\mu$ we have $M_{\{\mu\}}=M_{\lgl\mu\rgl}$, hence (2). 
\end{proof}

\subsection{Projective covers over local rings} For the next results we have to assume that projective covers exist in the category of $\SA$-modules, so we assume that $\SA$ is a local ring. Here is a short reminder on projective covers. Let $\SR$ be a ring and $M$ an $\SR$-module. Recall that a {\em projective cover} of $M$ is a surjective homomorphism $\phi\colon P\to M$ such that $P$ is a projective $\SR$-module   and such that  any submodule $U\subset P$ with $\phi(U)=M$ satisfies $U=P$. If $\SR$ is a local ring, then projective covers exist for finitely generated $\SR$-modules. They can be constructed as follows. Denote by $\SF$ the residue field of $\SR$. For an $\SR$-module $N$ we let $\ol N=N\otimes_\SR\SF$ be the associated $\SF$-vector space. In the situation above, choose an isomorphism $\SF^{n}\cong \ol M$. This can be lifted to a homomorphism $\phi\colon \SR^{n}\to M$, and Nakayama's lemma implies that this is a projective cover.

\subsection{The $\Gamma$-extension}
We let $I^\prime\subset I=I^\prime\cup\{\mu\}$ be as in Section \ref{subsec-minext}. We let $\Gamma$ be an arbitrary subset of $\SA^{\irr}$. In contrast to the case $\Gamma=\emptyset$ we now have to assume that projective covers in the category of $\SA$-modules exist. Moreover, the statement about endomorphisms in the proposition below is  slightly weaker than in the case $\Gamma=\emptyset$.  

\begin{proposition} \label{prop-GammaExtObj} Assume that $\SA$ is  local. 
 Let  $M^\prime$ be an object in $\CX_{I^\prime}$, and let $\Gamma$ be  a subset of $\SA^{\irr}$.  Then there exists an up to isomorphism unique  object $M$ in $\CX_I$ with the following properties.
\begin{enumerate}
\item $M_{I^\prime}$ is isomorphic to $M^\prime$.
\item An endomorphism $f\colon M\to M$ in $\CX_I$ is an automorphism if and only if $f_{I^\prime}\colon M_{I^{\prime}}\to M_{I^\prime}$ is an automorphism.
\item $M_{\lgl\mu\rgl}=M_{\{\mu\},\Gamma}$. 

\end{enumerate}
\end{proposition}

\begin{proof}
First, let us prove that an object $M$ having the properties (1), (2), and (3) is unique. So suppose that $M_1$ and $M_2$ have these properties. Then, by (1), we have an isomorphism $M_{1I^\prime}\cong M_{2I^\prime}$.  From Proposition \ref{prop-mainext} we deduce that this isomorphism identifies $M_{1\{\mu\}}$ with $M_{2\{\mu\}}$ and hence $M_{1\{\mu\},\Gamma}$ with $M_{2\{\mu\},\Gamma}$, so $M_{1\lgl\mu\rgl}$ with $M_{2\lgl\mu\rgl}$ by property (3). So the condition in Lemma \ref{lemma-softextmor} is satisfied, so  the chosen isomorphism extends to a homomorphism $f\colon M_1\to M_2$. Reversing the roles of $M_2$ and $M_1$ yields a homomorphism $g\colon M_2\to M_1$ in an analogous way. Now property (2) implies that $g\circ f$ and $f\circ g$ are automorphisms. Hence $f$ and $g$ are isomorphisms.

It remains to show that an object $M$ with  properties (1), (2) and (3) exists. Denote by $\tM$ the object in $\CX_I$ that extends $M^\prime$ in the sense of Proposition \ref{prop-softextobj}. Then we can identify  $\tM_{\delta\mu}$ with $M^\prime_{\delta\mu}$. We set  $Q:=\tM_{\{\mu\},\Gamma}/\tM_{\{\mu\}}$, so this is a torsion $\SA$-module. It is finitely generated as it is also a quotient of $\tM_\mu$. Now we fix a projective cover  $\ol\gamma\colon D\to Q$ in the category of $\SA$-modules, and we denote by $\gamma\colon D\to \tM_{\{\mu\},\Gamma}$ a lift of $\ol\gamma$. We can also  consider $\gamma$ as a homomorphism from $D$ to $\tM_{\delta\mu}$. 

We define $M$ as follows. We set $M_\nu:=\tM_\nu$ for all $\nu\in I^\prime$, $E^M_{\nu,\alpha,n}:=E^{\tM}_{\nu,\alpha,n}$ and $F^M_{\nu,\alpha,n}:=F^{\tM}_{\nu,\alpha,n}$. Then we set $M_\mu:=\tM_\mu\oplus D$ and define $F^M_{\mu,\alpha,n}:=(F^{\tM}_{\mu,\alpha,n},0)^T\colon M_{\delta\mu}=\tM_{\delta\mu}\to M_{\mu}$ and $E^{M}_{\mu,\alpha,n}:=(E^{\tM}_{\mu,\alpha,n},\gamma)\colon M_{\mu}\to M_{\delta\mu}=\tM_{\delta\mu}$. 
We now show that $M=\bigoplus_{\nu\in I}M_\mu$ together with the $E$- and $F$-maps above is an object in $\CX_I$. Condition  (X1) is clearly satisfied. We now show that (X2) is also satisfied. Let $\nu\in I$, $\alpha,\beta\in\Pi$, $m,n>0$ and $v\in M_{\nu+n\beta}$. We need to show that 

\begin{align*}
E_{\alpha,m}F_{\beta,n}(v)=
\begin{cases}
F_{\beta,n}E_{\alpha,m}(v),&\text{ if $\alpha\ne\beta$}\\
\sum_{0\le r\le \min(m,n)} \qchoose{\lgl\nu,\alpha^\vee\rgl+m+n}{r}_{d_\alpha}F_{\alpha,n-r}E_{\alpha,m-r}(v), &\text{ if $\alpha=\beta$}.
\end{cases}
\end{align*}
If $\nu\ne\mu$, then this follows immediately from the fact that (X2) is satisfied for $\tM$, and in the case $\nu=\mu$ it follows as $E^M_{\mu,\alpha,m}$ coincides with $E^{\tM}_{\mu,\alpha,m}$ on the image of $F_{\mu,\beta,n}$.  
We now check the condition (X3). It  is satisfied for all $\nu\ne\mu$, as it is satisfied for $\tM$. In the case $\nu=\mu$, (X3a) follows from the corresponding condition for $\tM$ as the image of $F_\mu^M$ coincides with the image of $F_\mu^{\tM}$ and $E^M_\mu$ agrees with $E^{\tM}_\mu$ on this image. By construction $M_{\lgl\mu\rgl}=M_{\{\mu\},\Gamma}$, so  the inclusion $M_{\{\mu\}}\subset M_{\lgl\mu\rgl}$ has a torsion cokernel. Finally, we have $\im F_\mu^{M}=(\tM_\mu,0)$, so the quotient $M_\mu/\im F^M_\mu$ is isomorphic to $D$. As $D$ was chosen to be a projective $\SA$-module, it is free. So we have indeed constructed an object in $\CX_I$.

 We need to check that $M$ satisfies the properties (1a), (1b)  and (2). Clearly, $M_{I^\prime}=\tM_{I^\prime}\cong M^\prime$ so (1a) is satisfied.
We have already observed that $M_{\lgl\mu\rgl}=M_{\{\mu\},\Gamma}$, hence (2).  Now let $f\colon M\to M$ be an endomorphism and suppose that $f_{I^\prime}\colon M_{I^\prime}\to M_{I^\prime}$ is an automorphism, i.e.~$f_\nu\colon M_\nu\to M_\nu$ is an automorphism for all $\nu\ne\mu$.  Then $f_{\delta\mu}\colon M_{\delta\mu}\to M_{\delta\mu}$ is an automorphism, and hence the restriction of $f_{\mu}|_{\im F_\mu}\colon \im F_\mu\to \im F_\mu$  is an automorphism.  Applying $E_\mu$ shows that $f_{\{\mu\}}$ is an automorphism of $M_{\{\mu\}}$. Hence $f_{\delta\mu}$ induces an automorphism of $M_{\{\mu\},\Gamma}$ and we obtain an induced automorphism of the quotient $Q$ defined earlier in this proof. As $\ol\gamma\colon D\to Q$ is a projective cover, also the induced endomorphism on $D$  must be an automorphism. Hence $f_\mu$ is an automorphism, and hence so is $f$. Hence (2) also holds.
\end{proof}

\subsection{The category of $\Gamma$-standard objects}

Again we fix a subset $\Gamma$ of $\SA^{\irr}$. We can now classify a family of objects parametrized by their highest weight. 

\begin{proposition}\label{prop-catGammaStand} Suppose that $\SA$ is local in the case that  $\Gamma\ne\emptyset$. 
\begin{enumerate}
\item  For all $\lambda\in X$ there exists an up to isomorphism unique object $S_\Gamma(\lambda)$ in $\CX$ with the following properties.
\begin{enumerate}
\item $S_\Gamma(\lambda)_\lambda$ is free of rank $1$ and $S_\Gamma(\lambda)_\mu\ne\{0\}$ implies $\mu\le\lambda$. 
\item $S_\Gamma(\lambda)$ is indecomposable and $\Gamma$-standard. 
\end{enumerate}
\end{enumerate}
Moreover, the objects $S_\Gamma(\lambda)$ characterized in (1) have the following properties.
\begin{enumerate}\setcounter{enumi}{1}
\item An endomorphism $f$ of $S_\Gamma(\lambda)$ is an automorphism if and only if it restricts to an automorphism on the $\lambda$-weight space. In the case  $\Gamma=\emptyset$, we even have   $\End_\CX(S_\emptyset(\lambda))=\SA\cdot\id$. 
\item Let $S$ be a $\Gamma$-standard object in $\CX_\SA$. Then there is an index set $J$ and some elements $\lambda_i\in X$ for $i\in J$ such that $S\cong \bigoplus_{i\in J}S_\Gamma(\lambda_i)$. 
\end{enumerate}
\end{proposition}
Sometimes we will write $S_{\Gamma\SA}(\lambda)$ to incorporate the ground ring.

\begin{proof} We start with proving that there exists an object $S_\Gamma(\lambda)$ satisfying the properties (1a) and (1b) as well as (2).  We then show that (3) holds with this particular set of objects $S_\Gamma(\lambda)$, which then implies the remaining uniqueness statement in (1). So  set ${I_\lambda}:=X\setminus\{<\lambda\}=\{\mu\in X\mid \mu\not <\lambda\}$. This is a closed subset of $X$ that contains $\lambda$ as a minimal element. Define $S^\prime=\bigoplus_{\mu\in {I_\lambda}}S^\prime_\mu$ by setting $S^\prime_\lambda=\SA$ and $S^\prime_\mu=\{0\}$ for $\mu\in I_\lambda\setminus\{\lambda\}$.  For any $\mu\in I_\lambda$, $\alpha\in\Pi$, $n>0$ we have  $S^\prime_{\mu+n\alpha}=0$, and so all the maps $F_{\mu,\alpha,n}$ and $E_{\mu,\alpha,n}$ are the zero homomorphisms. Then $S^\prime$ is an object in $\CX_{I_\lambda}$. 

Note that for any $\mu\in X\setminus I_\lambda$ the set $\{\nu\in X\setminus I_\lambda\mid \mu\le\nu\}$ is  contained in the interval $[\mu,\lambda)$, hence is finite. We can hence  employ Proposition \ref{prop-softextobj} in the case $\Gamma=\emptyset$ and Proposition \ref{prop-GammaExtObj} in the other cases inductively on the partially ordered set $X\setminus I_\lambda$. We   deduce that there exists an object $S_\Gamma(\lambda)$ in $\CX$ such that $S_\Gamma(\lambda)_{I_\lambda}=S^\prime$ (this implies property (1a)) and such that for $\mu\in X$, $\mu\not\in I_\lambda$ we have $S_\Gamma(\lambda)_{\lgl\mu\rgl}=S_{\Gamma}(\lambda)_{\{\mu\},\Gamma}$. Both objects vanish for  $\mu\in I_\lambda$ by the definition of $S^\prime$. Hence $S_\Gamma(\lambda)$ is $\Gamma$-standard.  Proposition \ref{prop-softextobj}, or  Proposition \ref{prop-GammaExtObj}, resp.,  now  imply, by induction,  the statement (2) for the particular object $S_\Gamma(\lambda)$ that we just constructed. It follows that  $S_\Gamma(\lambda)$ is indecomposable. So property (1b) holds for $S_\Gamma(\lambda)$ as well. 

We are now left with proving  property (3), where we assume that the  $S_\Gamma(\lambda)$ appearing in the statement are the objects we just constructed explicitely.  So let $S$ be a $\Gamma$-standard object, and fix a maximal weight $\lambda$ of $S$ (which exists by (X1)). Then $S_\lambda$ is a free $\SA$-module by assumption (X3c).  Set $I^\prime_\lambda:=\{\mu\in X\mid \lambda\le\mu\}$ (a closed subset of $X$) The maximality of $\lambda$ implies that the restriction $S_{I^\prime_\lambda}$ in $\CX_{I^\prime_\lambda}$ is isomorphic to a direct sum $\tilde S_{I^\prime_\lambda}$, where $\tilde S$ is isomorphic to  a direct sum of  copies of $S_\Gamma(\lambda)$.  Since both $S$ and  $\tilde S$ are $\Gamma$-standard,  Lemma \ref{lemma-softextmor} implies  that the identification of the $I^\prime_\lambda$-restrictions extend, so there are morphisms $f\colon \tilde S\to S$ and $g\colon S\to \tilde S$ such that $(g\circ f)|_{I^\prime_\lambda}$ is the identity. We deduce that $g\circ f$ is an automorphism. Hence $\tilde S$ is isomorphic to a direct summand of $S$. By construction, $\lambda$ is not a weight of a direct complement. From here we can continue by induction to prove (3). 
\end{proof}
\begin{remark}\label{rem-algo} Note that the main ingredient in the existence result  above is Proposition \ref{prop-GammaExtObj}. The proof of that proposition  is constructive, i.e.~it can be read as an algorithm to construct the weight spaces of the objects $S_{\Gamma}(\lambda)$ inductively, starting with the highest weight space.
\end{remark}

Now assume for a moment that $\SA=\SK$ is a field. 

\begin{lemma}\label{lemma-fieldcase} Any object in $\CX_\SK$ is isomorphic to a direct sum of copies of the $\emptyset$-standard objects $S_\emptyset(\lambda)$ and hence $\CX_\SK$ is a semisimple category.
\end{lemma}
\begin{proof}
For any object $M$ in $\CX_\SK$ we have $M_\mu=\im F_\mu\oplus\ker E_\mu$ by Lemma \ref{lemma-X3}. It follows from Lemma \ref{lemma-soft} that $M$ is $\emptyset$-standard and the claim follows from Proposition \ref{prop-catGammaStand}.
\end{proof}

\subsection{Base change}\label{sec-basech} We now want to understand whether the conditions that define the category $\CX$ are stable under base change. 
So let $\SA\to\SB$  be a homomorphism of unital $\SZ$-algebras (that are factorial domains). Let $M$ be an object in $\CX_{\SA}$. We define $M_{\SB}=\bigoplus_{\mu\in X} M_{\SB\mu}$ by setting $M_{\SB\mu}:=M_\mu\otimes_\SA\SB$. For $\mu\in X$, $\alpha\in\Pi$ and $n>0$ we then have induced homomorphisms $E^{M_{\SB}}_{\mu,\alpha,n}=E_{\mu,\alpha,n}\otimes\id_{\SB}\colon M_{\SB\mu}\to M_{\SB\mu+n\alpha}$ and $F^{M_{\SB}}_{\mu,\alpha,n}=F_{\mu,\alpha,n}\otimes\id_{\SB}\colon M_{\SB\mu+n\alpha}\to M_{\SB\mu}$.

\begin{lemma} \label{lemma-basechange} Suppose that $\SA\to\SB$ is a flat homomorphism. Then the  object $M_\SB$ is contained in $\CX_\SB$.
\end{lemma}
\begin{proof} It is clear that the properties (X1) and (X2) are stable under arbitrary base change. Moreover,  $M_{\SB\lgl\mu\rgl}$, $M_{\SB\{\mu\}}$ and  $\im F_{\mu}^{M_\SB}$  are obtained from $M_{\lgl\mu\rgl}$, $M_{\{\mu\}}$ and $\im F_\mu^M$ by base change. Hence (X3b) and (X3c) also hold for $M_\SB$. By the flatness condition, the homomorphism $E_\mu|_{\im F_\mu}$ remains injective after base change. Hence property  (X3a) also holds.  \end{proof}

\begin{lemma}\label{lemma-basechangeS} Suppose that $\SA\to\SB$ is a flat homomorphism of local $\SZ$-algebras. Let $\Gamma$ be a subset of $\SA^{\irr}$ and let $\Gamma^\prime$ be the image of $\Gamma$ in $\SB$. Suppose that $\Gamma^\prime$ does not contain $0$.  Let $\lambda\in X$. 
\begin{enumerate}
\item The object $S_{\Gamma,\SA}(\lambda)_\SB$  is $\Gamma^\prime$-standard (but in general  decomposable). 
\item For $\Gamma=\emptyset$ we have $S_{\emptyset,\SA}(\lambda)_\SB\cong S_{\emptyset,\SB}(\lambda)$. 
\end{enumerate}
\end{lemma}
\begin{proof}  As we have a flat extension, the object $S_{\Gamma,\SA}(\lambda)_\SB$ is contained in $\CX_\SB$ by Lemma \ref{lemma-basechange}. The condition $M_{\{\mu\},\Gamma}=M_{\lgl\mu\rgl}$ is stable  under base change, so $S_{\Gamma,\SA}(\lambda)_{\SB}$ is $\Gamma^\prime$-standard in $\CX_\SB$. Proposition  \ref{prop-catGammaStand} implies that it is isomorphic to a direct sum of various $S_{\Gamma^\prime,\SB}(\mu)$'s. Now suppose that  $\Gamma=\emptyset$. As $S_{\emptyset,\SA}(\lambda)$ is generated by its $\lambda$-weight space\footnote{i.e., the smallest $X$-graded subspace of $S_{\emptyset,\SA}(\lambda)$ that contains $S_{\emptyset,\SA}(\lambda)_\lambda$ and is stable under all $E$- and $F$-maps,  is $S_{\emptyset,\SA}(\lambda)$}, which is of rank $1$, and $\lambda$ is maximal among the weights. Hence the same holds for  $S_{\emptyset,\SA}(\lambda)_{\SB}$. This implies that it is isomorphic to $S_{\emptyset,\SB}(\lambda)$.  
 \end{proof}

\section{Representations of quantum groups}\label{sec-qg}

In this section we show that the category $\CX$ has an interpretation in terms of quantum group representations. The main reasons for this are the uniqueness of the $\emptyset$-standard objects $S_\emptyset(\lambda)$ and the semisimplicity statement in Lemma \ref{lemma-fieldcase}. 

\subsection{Quantum groups over $\SZ$-algebras}
We denote by $U_\SZ$ the quantum group over $\SZ=\DZ[v,v^{-1}]$ (with divided powers) associated with the Cartan matrix $(\lgl\alpha,\beta^\vee\rgl)_{\alpha,\beta\in\Pi}$ of $R$. Its definition by generators and relations can be found  \cite[Sections 1.1-1.3]{L2}. We denote by $e^{[n]}_\alpha,  f^{[n]}_\alpha, k_\alpha, k_\alpha^{-1}$ for $\alpha\in\Pi$ and $n>0$ the standard generators of $U_\SZ$. 

For  $\alpha\in R$,  $n>0$ also the element
$$ 
\qchoose{k_\alpha}{n}_{\alpha}:=\prod_{s=1}^n \frac{k_\alpha v_\alpha^{-s+1}-k_{\alpha}^{-1}v_\alpha^{s-1}}{v_\alpha^{s}-v_\alpha^{-s}} 
$$
is contained in $U_\SZ$ (where $v_\alpha:=v^{d_\alpha}$). We let $U_\SZ^+$, $U_\SZ^-$ and $U_\SZ^0$ be the unital subalgebras of $U_\SZ$ that are generated by the sets $\{e_\alpha^{[n]}\}$, $\{f_\alpha^{[n]}\}$ and $\{k_\alpha, k_\alpha^{-1},\qchoose{k_\alpha}{n}_{\alpha}\}$, resp. A remarkable fact, proven by Lusztig, is that each of these subalgebras is free over $\SZ$ and admits a PBW-type basis, and that the multiplication map $U^-_\SZ\otimes_\SZ U_\SZ^0\otimes_\SZ U_\SZ^+\to U_\SZ$ is an isomorphism of $\SZ$-modules (Theorem 6.7 in \cite{L2}). 

For a $\SZ$-algebra $\SA$ as above we set $U_\SA:=U_\SZ\otimes_\SZ\SA$ and $U_\SA^\ast:= U_\SZ^\ast\otimes_\SZ\SA$ for $\ast=-,0,+$.
In this article we consider $U_\SA$ only as an associative, unital algebra and forget about the Hopf algebra structure.

\subsection{The category $\CO_\SA$}
By \cite[Lemma 1.1]{APW} every $\mu\in X$ yields a character
\begin{align*}
\chi_\mu\colon U^0_\SZ&\to\SZ\\
k_\alpha^{\pm1}&\mapsto v_\alpha^{\pm \lgl\mu,\alpha^\vee\rgl}\\
\qchoose{k_\alpha}{r}_\alpha&\mapsto \qchoose{\lgl\mu,\alpha^\vee\rgl}{r}_{d_\alpha}\text{ ($\alpha\in\Pi$, $r\ge0$)}.
\end{align*}
We can extend this character to a character $\chi_\mu\colon U^0_\SA\to\SA$. 
A $U_\SA$-module $M$ is called a {\em weight module} if $M=\bigoplus_{\mu\in X}M_\mu$, where
$$
M_\mu:=\{m\in M\mid H.m=\chi_\mu(H)m\text{ for all $H\in U_\SA^0$}\}.
$$
Hence all the weight modules that we consider in this article  are ``of type 1'' (cf.~\cite[Section 5.1]{JanQG}).

\begin{definition}\label{def-Q} Let $\CO_\SA$ be the full subcategory of the category of $U_\SA$-modules that contains all objects $M$ with the following properties.
\begin{enumerate}
\item $M$ is a weight module and its set of weights is quasi-bounded from above. 
\item For each $\mu\in X$, the weight space $M_\mu$ is a finitely generated torsion free $\SA$-module. 
\end{enumerate}
\end{definition}

\begin{remark} Note that the definition above yields an $\SA$-linear category, which in general is not abelian (due to the torsion freeness assumption).  If $\SA=\SK$ is a field, then torsion freeness is always satisfied, and we obtain an abelian category.
\end{remark}

Now we establish a first, rather easy, link to the objects that we considered in the earlier chapters.
Let us denote by $\tilde\CX_\SA$ the category whose objects are  $X$-graded $\SA$-modules $M$ endowed with operators $E_{\alpha,n}$ and $F_{\alpha,n}$ as in Section \ref{sec-Xgradop} that satisfy conditions (X1) and (X2) (but not necessarily (X3)), and with morphisms being the $X$-graded $\SA$-linear homomorphisms that commute with the $E$- and $F$-maps.  

Let $M$ be an object in $\CO_\SA$. Let us denote  by $E_{\mu,\alpha,n}\colon M_\mu\to M_{\mu+n\alpha}$ and $F_{\mu,\alpha,n}\colon M_{\mu+n\alpha}\to M_\mu$ the homomorphisms given by the actions of $e_\alpha^{[n]}$ and $f_{\alpha}^{[n]}$, resp. By forgetting structure, we now consider $M$ only as an $X$-graded space endowed with these operators. 
\begin{lemma} \label{lemma-funS} The above yields a  fully faithful functor
$$
\mathsf{S}\colon \CO_\SA\to\tilde \CX_\SA.
$$
\end{lemma}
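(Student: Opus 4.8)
The plan is to exhibit the functor $\mathsf{S}$ explicitly and then verify the three properties: that it lands in $\tilde\CX_\SA$, that it is faithful, and that it is full. First I would observe that the assignment is essentially forced: given an $X$-graded $U_\SA$-module $M$, the underlying $X$-graded $\SA$-module is the same, and the operators $E_{\mu,\alpha,n}$, $F_{\mu,\alpha,n}$ are defined by the actions of the divided powers $e_\alpha^{[n]}$, $f_\alpha^{[n]}$; on morphisms $\mathsf{S}$ is the identity on the underlying collections of $\SA$-linear maps. The content of ``functor'' is then just that a $U_\SA$-module homomorphism preserving the $X$-grading automatically commutes with the $E$- and $F$-maps, which is immediate since these maps are given by the algebra action, and that identities and composites are preserved, which is trivial.

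Next I would check that $\mathsf{S}(M)$ actually satisfies (X1) and (X2). Property (X1) is literally condition (1) in Definition \ref{def-XgradUmod}. For (X2), the point is that the Lusztig-type commutation relations between $e_\alpha^{[m]}$ and $f_\beta^{[n]}$ inside $U_\SA$ specialize, when evaluated on a weight vector $v\in M_{\mu+n\beta}$ and using condition (3) of Definition \ref{def-XgradUmod} to identify the value of $k_\alpha$ on that weight space with $v^{d_\alpha\lgl\mu,\alpha^\vee\rgl}$ (after accounting for the weight shift coming from applying $f_\beta^{[n]}$), to exactly the formula displayed in (X2). Concretely, for $\alpha\ne\beta$ the generators $e_\alpha^{[m]}$ and $f_\beta^{[n]}$ commute in $U_\SA$; for $\alpha=\beta$ one invokes the standard identity $e_\alpha^{[m]}f_\alpha^{[n]}=\sum_{r}f_\alpha^{[n-r]}\qchoose{k_\alpha;2r-m-n}{r}_{d_\alpha}e_\alpha^{[m-r]}$ in $U_\SA$ and substitutes the eigenvalue of $k_\alpha$ on $M_{\mu+n\alpha}$. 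This is the step I expect to be the main obstacle, not because it is deep but because it requires carefully matching Lusztig's $\qchoose{k_\alpha;c}{r}$-brackets with the combinatorial symbol $\qchoose{\lgl\mu,\alpha^\vee\rgl+m+n}{r}_{d_\alpha}$ used in (X2), keeping track of all weight shifts; one should probably refer the reader to Section \ref{sec-comrels} (as the excerpt already does parenthetically) for the precise bookkeeping.

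For faithfulness, suppose $f\colon M\to N$ in $U_\SA\catmod_X$ has $\mathsf{S}(f)=0$; then every component $f_\mu\colon M_\mu\to N_\mu$ is zero, hence $f=0$ as a map of $X$-graded modules. For fullness, let $g\colon \mathsf{S}(M)\to\mathsf{S}(N)$ be a morphism in $\tilde\CX_\SA$, i.e. a collection $\{g_\mu\colon M_\mu\to N_\mu\}$ of $\SA$-linear maps commuting with all $E_{\alpha,n}$ and $F_{\alpha,n}$. Assembling the $g_\mu$ into a single $\SA$-linear map $g\colon M\to N$, I must show $g$ is $U_\SA$-linear. Since $U_\SA$ is generated as an algebra by the $e_\alpha^{[n]}$, $f_\alpha^{[n]}$, $k_\alpha^{\pm1}$, it suffices to check compatibility with each generator: compatibility with $e_\alpha^{[n]}$ and $f_\alpha^{[n]}$ is exactly the commuting-square condition defining a morphism in $\tilde\CX_\SA$, and compatibility with $k_\alpha^{\pm1}$ follows from condition (3) of Definition \ref{def-XgradUmod}, since $k_\alpha$ acts on both $M_\mu$ and $N_\mu$ as the same scalar $v^{d_\alpha\lgl\mu,\alpha^\vee\rgl}$ and $g_\mu$ is $\SA$-linear. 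Hence $g=\mathsf{S}(f)$ for the morphism $f$ in $U_\SA\catmod_X$ with components $g_\mu$, and $\mathsf{S}$ is full.
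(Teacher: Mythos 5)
Your proof is correct and follows essentially the same route as the paper: (X1) is immediate from Definition \ref{def-XgradUmod}(1), (X2) comes from the quantum-group commutation relations combined with the diagonal action of $k_\alpha$ and the computation in Section \ref{sec-comrels}, and fullness/faithfulness follow because $U_\SA$ is generated by $e_\alpha^{[n]}$, $f_\alpha^{[n]}$, $k_\alpha^{\pm 1}$ with the latter acting by scalars fixed by the grading. The only stylistic difference is that you invoke Lusztig's higher-order divided-power identity directly, whereas the paper derives the same formula from the single-generator relation via Lemma \ref{lemma-comrel}; both are equivalent bookkeeping.
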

\begin{proof} It is clear that the above construction is functorial.  Let $M$ be an object in $\CO_\SA$. We need to check that the graded space with operators that we obtain from $M$ satisfies the conditions (X1) and (X2). Condition (X1) is part of the definition of $\CO_\SA$. Now we check condition (X2). Set
$$
\qchoose{k_\alpha;c}{r}_\alpha =\prod_{s=1}^r\frac{k_\alpha v_\alpha^{c-s+1}-k_{\alpha}^{-1}v_\alpha^{-c+s-1}}{v_\alpha^{s}-v_\alpha^{-s}}.
$$
This element is contained in $U^0_\SA$ and acts as multiplication with 
\begin{align*}
\prod_{s=1}^r\frac{v_\alpha^{\lgl\nu,\alpha^\vee\rgl+c-s+1}-v_\alpha^{-\lgl\nu,\alpha^\vee\rgl-c+s-1}}{v_\alpha^{s}-v_\alpha^{-s}}.
\end{align*}
on each vector of weight $\nu$. By \cite[Section 6.5]{L2} the following relations holds in $U_\SZ$ for all $\alpha,\beta\in\Pi$, $m,n>0$: 
 $$
 e_{\alpha}^{[m]}f_\beta^{[n]}=\sum_{r=0}^{\min(m,n)} f_\alpha^{[n-r]}\qchoose{k_\alpha;2r-m-n}{r}_\alpha e_\alpha^{[m-r]}.
 $$
For  $v\in M_\mu$ we hence obtain
\begin{align*}
 e_{\alpha}^{[m]}f_\beta^{[n]}(v)&=\sum_{r=0}^{\min(m,n)} f_\alpha^{[n-r]}\prod_{s=1}^r\frac{v_\alpha^{\zeta-s+1}-v_\alpha^{-\zeta+s-1}}{v_\alpha^{s}-v_\alpha^{-s}}e_\alpha^{[m-r]}(v),
 \end{align*}
 where $\zeta=\lgl\mu+(m-r)\alpha,\alpha^\vee\rgl+2r-m-n=\lgl\mu,\alpha^\vee\rgl+m-n$. In order to prove that condition (X2) holds, it remains to show that 
 $$
 \qchoose{\zeta}{r}_{d_\alpha}=\prod_{s=1}^r\frac{v_\alpha^{\zeta-s+1}-v_\alpha^{-\zeta+s-1}}{v_\alpha^{s}-v_\alpha^{-s}},
 $$
 which is (almost) immediate from the definition. Hence $\mathsf{S}$ is indeed a functor from $\CO_\SA$ to $\CX_\SA$.
 
Now $U_\SA$ is generated by the elements $e_\alpha^{[n]}$, $f_\alpha^{[n]}$ for $\alpha\in\Pi$ and $n>0$ as an algebra over $U_\SA^0$ by the PBW-theorem.  As the actions of the $e$- and $f$-elements are encoded by the $E$- and $F$-homomorphisms, and as the action of $U_\SA^{0}$ is  encoded by the $X$-gradation, the functor $\mathsf{S}$ is fully faithful. 
\end{proof}
Note that we can consider $\CX_\SA$ as a full subcategory of $\tilde\CX_\SA$. In the next section we construct a functor $\mathsf{R}\colon\CX_\SA\to\CO_\SA$ that is right inverse to $\mathsf{S}$.

\subsection{A functor from $\CX_\SA$ to  $\CO_\SA$} First we suppose that $\SA=\SK$ is a field. 
For each $\lambda$, the character $\chi_\lambda$ of $U^0_\SK$ can be uniquely extended to a character of  $U_\SK^0U_\SK^+$ such that $\chi_\lambda(e_\alpha^{[n]})=0$ for all $\alpha\in\Pi$, $n>0$. So we obtain an $U_\SK^0U_\SK^+$-module $\SK_\lambda$ of dimension $1$. We denote by $\Delta_\SK(\lambda):=U_\SK\otimes_{U_\SK^0U_\SK^+}\SK_\lambda$ the induced $U_\SK$-module (this is the {\em Verma module} with highest weight $\lambda$). It  has a unique irreducible quotient that  we denote by   $L_\SK(\lambda)$. Both are objects in $\CO_\SK$. For more information on these objects, see \cite{A,FracPerO}.

\begin{proposition} \label{prop-irreps} Suppose that $\SA=\SK$ is a field. For all $\lambda\in X$ the object  $\mathsf{S}(L_\SK(\lambda))$ is contained in the subcategory $\CX_\SK$ of $\tilde\CX_\SK$ and it is isomorphic  to $S_{\emptyset,\SK}(\lambda)$. 
\end{proposition}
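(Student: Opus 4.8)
The plan is to show that $\mathsf{S}(L_\SK(\lambda))$ lands in $\CX_\SK$ and is isomorphic to $S_\SK(\lambda)$ by matching up the two characterizations given by Proposition \ref{prop-catsoft}. First I would establish that $\mathsf{S}(L_\SK(\lambda))$ is actually an object of $\CX_\SK$. Since $\mathsf{S}$ lands a priori only in $\tilde\CX_\SK$, what must be checked is condition (X3); but $\SK$ is a field, so by Lemma \ref{lemma-X3} condition (X3) reduces to $M_\mu=\ker E_\mu\oplus\im F_\mu$ for all $\mu\in X$, which in turn (by Lemma \ref{lemma-soft}, noting (X3a) is automatic over a field since there is no torsion) amounts to $M_{\lgl\mu\rgl,\min}=M_{\lgl\mu\rgl}$, i.e. softness. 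So it suffices to prove that $\mathsf{S}(L_\SK(\lambda))$ is soft. This is the content of Remark \ref{rem-fieldcase}: \emph{every} object of $\tilde\CX_\SK$ coming from an $X$-graded $U_\SK$-module satisfies $M_\mu = \im F_\mu \oplus \ker E_\mu$ — here one uses the representation theory of the rank-one quantum group $\mathfrak{sl}_2$-triple attached to each simple root, namely that over the field $\SK$ each weight space decomposes into the image of the lowering operators and the kernel of the raising operators. Hence $\mathsf{S}(L_\SK(\lambda))$ is soft, and therefore an object of $\CX_\SK$.

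Next, by Proposition \ref{prop-catsoft}(3), a soft object of $\CX_\SK$ is a direct sum $\bigoplus_{i}S_\SK(\lambda_i)$. So I would identify which summands occur for $M:=\mathsf{S}(L_\SK(\lambda))$. The object $L_\SK(\lambda)$ is the irreducible highest weight module, so $\lambda$ is its unique maximal weight and $M_\lambda$ is one-dimensional over $\SK$; moreover $L_\SK(\lambda)$ is generated as a $U_\SK$-module by $M_\lambda$, which translates (as in the footnote in the proof of Lemma \ref{lemma-basechangeS}) into the statement that $M$ is generated by its $\lambda$-weight line under the $E$- and $F$-operators, i.e. $M$ has no proper graded subobject containing $M_\lambda$ and stable under all $E_{\alpha,n}$, $F_{\alpha,n}$. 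In a decomposition $M\cong\bigoplus_i S_\SK(\lambda_i)$, the weight $\lambda$ being maximal and one-dimensional forces exactly one summand, say $S_\SK(\lambda)$, to contribute the $\lambda$-line, and every other summand $S_\SK(\lambda_i)$ is itself a graded, $E/F$-stable subobject not meeting $M_\lambda$; by indecomposability and the generation property, no such nonzero summand can exist. Hence $M\cong S_\SK(\lambda)$.

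I expect the main obstacle to be the bookkeeping around softness of $\mathsf{S}(L_\SK(\lambda))$: one must be careful that the operators $E_{\alpha,n}$ and $F_{\alpha,n}$ attached to the divided powers $e_\alpha^{[n]}$, $f_\alpha^{[n]}$ really do satisfy the rank-one decomposition $M_\mu=\im F_\mu\oplus\ker E_\mu$ for the \emph{column/row vectors} $E_\mu$, $F_\mu$ (which bundle together all simple roots and all $n>0$), not just for a single $\mathfrak{sl}_2$-triple. This is exactly what Remark \ref{rem-fieldcase} asserts is automatic for objects in the image of $\mathsf{S}$, so strictly speaking the argument is: invoke Remark \ref{rem-fieldcase} to get softness for free, then invoke Proposition \ref{prop-catsoft} to get the direct-sum decomposition, then use the highest-weight and generation properties of $L_\SK(\lambda)$ to pin down the single summand. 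The remaining steps — checking that $\dim_\SK M_\lambda=1$, that $\lambda$ is maximal, and that $M$ is generated by $M_\lambda$ — are all immediate from the definition of $L_\SK(\lambda)$ as the irreducible head of $V_\SK(\lambda)$.
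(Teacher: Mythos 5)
The proposal has a genuine gap in the first part. You claim that condition (X3a) is ``automatic over a field since there is no torsion,'' and that Remark \ref{rem-fieldcase} asserts that \emph{every} object in the image of $\mathsf{S}$ satisfies $M_\mu=\im F_\mu\oplus\ker E_\mu$. Neither is correct. Condition (X3a) says $\ker E_\mu\cap\im F_\mu=\{0\}$; this has nothing to do with torsion and is not automatic over a field. And Remark \ref{rem-fieldcase} is a statement about objects already known to lie in $\CX_\SK$ — it is the converse direction you need, not the direction you use, and invoking it here is circular. To see that the decomposition really is not automatic for arbitrary $X$-graded $U_\SK$-modules, take $\SK$ a cyclotomic field and $M=\mathsf{S}(\Delta_\SK(\lambda))$ with $\Delta_\SK(\lambda)$ reducible: then for a weight $\mu<\lambda$ carrying a nontrivial primitive vector one has $\im F_\mu=M_\mu$ but $\ker E_\mu\ne\{0\}$, so $\ker E_\mu\cap\im F_\mu\ne\{0\}$ and (X3a) fails. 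So the decomposition must be extracted from the particular module at hand, not from a generality.

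The paper's proof does exactly this: it uses the irreducibility of $L_\SK(\lambda)$ directly to compute both sides. For $\mu\ne\lambda$ one has $\im F_\mu=M_\mu$ (an irreducible highest-weight module is generated by its highest-weight line) and $\ker E_\mu=\{0\}$ (no nontrivial primitive vectors below $\lambda$); for $\mu=\lambda$ one has $\im F_\lambda=\{0\}$ and $\ker E_\lambda=M_\lambda$. In every case $M_\mu=\ker E_\mu\oplus\im F_\mu$, which by Lemma \ref{lemma-X3} is precisely (X3) over a field, and \emph{then} Remark \ref{rem-fieldcase} applies. Your second step — using the one-dimensionality of $M_\lambda$, maximality of $\lambda$, and generation by $M_\lambda$ to pin down the unique summand $S_\SK(\lambda)$ — is fine and essentially matches the paper's ``$\mathsf S$ faithful $\Rightarrow$ $M$ indecomposable, compare weights.'' But you need to replace the incorrect first step with the irreducibility argument above.
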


\begin{proof} In view of Lemma \ref{lemma-funS} we need to check property (X3) in order to prove the first statement. Set $M=\mathsf{S}(L_\SK(\lambda))$. As $L_\SK(\lambda)$ is an irreducible $U_\SK$-module of highest weight $\lambda$,  we have $\im F_\mu=M_\mu$ for all $\mu\ne \lambda$, and $\im F_\lambda=\{0\}$. On the other hand, there are no non-trivial primitive vectors in $L_\SK(\lambda)$ of weight $\mu$ if $\mu\ne\lambda$. (A primitive vector is a vector annihilated by all $e_{\alpha}^{[n]}$.) So we have  $\ker E_\mu=\{0\}$ for $\mu\ne\lambda$ and $\ker E_\lambda=M_\lambda$. In any case we have $M_\mu=\ker E_\mu\oplus \im F_\mu$, which, by Lemma \ref{lemma-X3}, is equivalent to the set of conditions (X3). Hence $M$ is an object in $\CX_\SK$. Lemma \ref{lemma-fieldcase} now yields that $M$ is isomorphic to a direct sum of various $S_{\emptyset,\SK}(\mu)$'s. As $\mathsf{S}$ is faithful, $M$ is indecomposable, and a comparison of weights shows $M\cong S_{\emptyset,\SK}(\lambda)$. 
\end{proof}

Here is our ``realization theorem''. 
\begin{theorem}\label{thm-XU} Let $M$ be an object in $\CX_\SA$. Then there exists a unique structure of a  $U_\SA$-module on $M$ such that the following holds.
\begin{itemize}
\item The $X$-gradation $M=\bigoplus_{\mu\in X}M_\mu$ is the weight decomposition.
\item  For all $\mu\in X$, $\alpha\in\Pi$, $n>0$, the homomorphisms $E_{\mu,\alpha,n}$ and $F_{\mu,\alpha,n}$ are the action maps of $e_{\alpha}^{[n]}$ on $M_\mu$ and $f_{\alpha}^{[n]}$ on $M_{\mu+n\alpha}$, resp.
\end{itemize}
 From this we obtain a fully faithful functor $\mathsf{R}\colon\CX_\SA\to \CO_\SA$, and we have $\mathsf{S}\circ\mathsf{R}\cong\id_{\CX_{\SA}}$. 
\end{theorem}

\begin{proof} Note that the uniqueness statement in the claim above follows immediately from the fact that $U_\SA$ is generated as an algebra by the elements $e_\alpha^{[n]}$, $f_\alpha^{[n]}$ and $k_\alpha$, $k_\alpha^{-1}$ for $\alpha\in\Pi$ and $n>0$. We now prove the existence of a $U_\SA$-module structure on $M$ with the alleged properties. 

First suppose that $\SA=\SK$ is a field. Then every object in $\CX_\SK$ is isomorphic to a direct sum of various $S_{\emptyset,\SK}(\lambda)$'s by Lemma \ref{lemma-fieldcase}. By inverting the statement of Proposition \ref{prop-irreps} we see that any $S_{\emptyset,\SK}(\lambda)$ carries the structure of an  $U_\SK$-module of the required kind  (making it isomorphic to $L_\SK(\lambda)$). So the result holds in the case that $\SA$ is a field.

Now let $\SA$ be arbitrary. We denote by $\SK$ its quotient field. For any object $M$ in $\CX_\SA$, $M_\SK=M\otimes_\SA\SK$ is an object in $\CX_\SK$ by Lemma \ref{lemma-basechange}. As $M$ is a torsion free $\SA$-module by Lemma \ref{lemma-X3}, we can view $M$ as an $\SA$-submodule in $M_\SK$. Now by the above, we can view $M_\SK$ as an object in $\CO_\SK$. As $M$ is stable under the maps $E_{\alpha,n}$ and $F_{\alpha,n}$, it is stable under the action of $e_{\alpha}^{[n]}$ and $f_\alpha^{[n]}$. Moreover, it is clearly stable under the action of $k_\alpha$ and $k_\alpha^{-1}$. Hence it is stable under the action of  $U_\SA\subset U_\SK$. So there is indeed a natural $U_\SA$-module structure on $M$, and one immediately checks that this makes it into an object of category $\CO_\SA$.

Clearly the above $U_\SA$-structure depends functorially on $M$, so  we indeed obtain a functor $\mathsf{R}$ from $\CX_\SA$ to $\CO_\SA$. It is obviously fully faithful and clearly  $\mathsf{S}\circ\mathsf{R}$ is isomorphic to the identity on $\CX_\SA$. 
\end{proof}

\section{Objects in $\CO$ that  admit a  Weyl filtration}\label{sec-Weyl}
 The main goal of this section is to show that the functors $\mathsf{S}$ and $\mathsf{R}$ induce mutually inverse equivalences between the category $\CX^f$ of objects in $\CX$ that are free of finite rank over $\SA$, and the category of objects in $\CO$ that admit a (finite) Weyl filtration. In order to be able to quote some representation theoretic results on  Weyl modules, we need to assume that the quotient field of our ground ring $\SA$ is {\em generic}. For example, this implies that the Weyl modules are free of finite rank over $\SA$ and that their characters are given by Weyl's character formula. Moreover, the finite dimensional representation theory of $U_\SK$ is semisimple in this case. We do not need $\SA$ to be local, though. 

\subsection{Generic algebras}\label{subsec-groundrings}
Let $\SA$ be a unital $\SZ$-algebra that is a factorial domain.  

\begin{definition} \label{def-generic} We say that $\SA$ is {\em generic} if for all  $n\ne 0$ and all $d>0$ the image of the quantum integer $[n]_{d}$ in $\SA$ is non-zero (i.e.~invertible in the quotient field $\SK$).
\end{definition}
We denote by $q\in\SA$ the image of $v$ under the structural homomorphism $\SZ\to\SA$, $f\mapsto f\cdot 1_\SA$. Then $q$ is invertible in $\SA$, and a $\SZ$-algebra is nothing but an algebra with a specified invertible element $q$.  
Note that if $\SA$ is not generic, then $q$ is a root of unity in $\SA$, i.e.~it divides $q^{l}-1$ for some $l>1$, as 
\begin{align*}
[n]_d&=\frac{v^{dn}-v^{-dn}}{v^d-v^{-d}}
=v^{-dn+d}\frac{v^{2dn}-1}{v^{2d}-1}. 
\end{align*}
The converse is not true. For example, a field of characteristic $0$ with $q=1$ is generic. However,  a field of positive characteristic and $q=1$ is not generic, but the ring of $p$-adic integers $\DZ_p$ with $q=1$ is generic. If $\zeta\in \SK$ is a root of unity $\ne 1$, then $\SA=\SK$ with $q=\zeta$ is not generic.

Let $p$ be a prime number and denote by $\SZ_\fp$ the localization of $\SZ$ at the prime ideal 
\begin{align*}
\fp&:=\{g\in\SZ\mid \text{ $g(1)$ is divisible by $p$}\}\\
&=\ker\left( \SZ\xrightarrow{v\mapsto 1} \DF_p\right),
\end{align*}
i.e.~$\SZ_\fp=\{\frac{f}{g}\in\Quot\SZ\mid \text{ $g(1)$ is not divisible by $p$}\}$. So $\SZ_\fp$ is a generic local $\SZ$-algebra, and its residue field is $\DF_p$. Similarly, denote by $\sigma_l\in\DQ[v]$ the $l$-th cyclotomic polynomial for $l\ge 1$, and let $\DQ[v]_{(\sigma_l)}$ be the localization at the prime ideal generated by $\sigma_l$. We obtain a generic and local $\SZ$-algebra with residue field $\DQ[\zeta_l]$, the $l$-th cyclotomic field.

The last two examples show that the assumption that $\SA$ is generic is not a huge restriction even if one is interested in the  theory of tilting modules for  algebraic groups in positive characteristics or for  quantum groups at roots of unity. These objects admit deformations to the generic and local algebras $\DZ_p$ and $\DQ[v]_{(\sigma_l)}$, and  one obtains information in the non-generic cases by specializing results from generic cases.

\subsection{Modules admitting a Weyl filtration}
 Let us now assume that $\SA$   is generic. We denote by  $\SK$ its quotient field. Recall that for any $\lambda\in X$ we denote by $L_\SK(\lambda)$ the irreducible object in $\CO_\SK$  with highest weight $\lambda$. Then $L_\SK(\lambda)$ is finite dimensional if and only if $\lambda$ is dominant, i.e.~is contained in the set $X^+=\{\lambda\in X\mid \lgl\lambda,\alpha^\vee\rgl\ge 0\text{ for all $\alpha\in\Pi$}\}$, cf.~\cite[Theorem 2.3]{A}. If $\lambda$ is dominant, then we define $W_\SA(\lambda)$ as the $X$-graded $U_\SA$-submodule in $L_\SK(\lambda)$ generated by a non-zero element in $L_\SK(\lambda)_\lambda$. This is an object in $\CO_\SA$ and it does not depend, up to  isomorphism, on the choice of the element.  It is called the {\em Weyl module} with highest weight $\lambda$. The following result can be found in  \cite[Proposition 1.22]{APW}.

\begin{proposition}\label{prop-Weylchar} Assume that $\SA$ is generic. Let $\lambda\in X$ be dominant. Then $W_\SA(\lambda)$ is a free $\SA$-module of finite rank and its character is given by Weyl's character formula.
\end{proposition}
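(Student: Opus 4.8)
The statement is a known result (APW, Prop. 1.22), so the work is to assemble it from what is available here. The plan is to reduce everything to the Weyl module in the classical sense and invoke the standard deformation theory of quantum groups over a generic ring. First I would identify $\Delta_\SA(\lambda)$ with the ``quantum Weyl module'' via the universal construction: since $L_\SK(\lambda)$ is the irreducible quotient of the induced (Verma-type) module $V_\SK(\lambda)$, and $\Delta_\SA(\lambda)$ is defined as the $U_\SA$-submodule generated by a highest-weight vector, the first step is to observe that $\Delta_\SA(\lambda)$ is a highest-weight module: it is generated over $U_\SA^-$ by the highest weight line $\SA\cdot m_\lambda$, and all its weights are $\le\lambda$ with $\Delta_\SA(\lambda)_\lambda$ free of rank $1$. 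Because $\lambda$ is dominant, one shows $\Delta_\SA(\lambda)$ is the image of the Weyl module $\Delta_\SA^{\mathrm{univ}}(\lambda)$ (induced then quotiented by the subobject forcing the $\fsl_2$-integrability relations $f_\alpha^{[n]}m_\lambda=0$ for $n>\lgl\lambda,\alpha^\vee\rgl$); over the generic ring $\SA$ these two coincide because the radical of $\Delta_\SA^{\mathrm{univ}}(\lambda)$ has zero intersection with $\Delta_\SA(\lambda)$ after tensoring up to $\SK$ and the module is torsion-free.

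\emph{Finiteness and freeness.} Next I would prove $\Delta_\SA(\lambda)$ is a free $\SA$-module of finite rank. Finite generation follows from $X$-local boundedness of the weights (condition (1) of Definition \ref{def-XgradUmod}) together with the fact that each weight space is a finitely generated $\SA$-module: indeed $\Delta_\SA(\lambda)_\mu$ is spanned over $\SA$ by monomials $f_{\alpha_{i_1}}^{[n_1]}\cdots f_{\alpha_{i_k}}^{[n_k]}m_\lambda$ of the appropriate degree, of which there are finitely many, and the integrability relations cut these down to a finite set — this is a standard PBW-type argument for $U_\SA^-$. Since $\Delta_\SA(\lambda)\subset L_\SK(\lambda)$ by construction, it is torsion-free over $\SA$; but over a general integral domain torsion-free finitely generated modules need not be free, so here I expect to need that $\SA$ is \emph{generic} only to control characters, not freeness — for freeness it is cleaner to argue weight-space by weight-space that $\Delta_\SA(\lambda)_\mu\otimes_\SA\SK$ has a fixed dimension and that a $\DZ[v,v^{-1}]$-form (the Lusztig integral form applied to a highest weight vector) gives a basis surviving base change. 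Concretely: the construction of $\Delta$ commutes with base change from the base ring $\SZ$ (or $\DZ[v,v^{-1}]$), so $\Delta_\SA(\lambda)\cong\Delta_{\DZ[v,v^{-1}]}(\lambda)\otimes\SA$, and $\Delta_{\DZ[v,v^{-1}]}(\lambda)$ is free by Lusztig's PBW basis theorem for the integral form; freeness is then preserved by $\otimes_{\DZ[v,v^{-1}]}\SA$.

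\emph{The character.} Finally, to get Weyl's character formula, I would pass to the quotient field $\SK$. Since $\SA$ is generic, $q$ is either not a root of unity or $\SK$ has characteristic zero, and in both situations $U_\SK$ behaves like the generic quantum group: the module $\Delta_\SK(\lambda)=\Delta_\SA(\lambda)\otimes_\SA\SK$ is the Weyl module over $\SK$, and by Lusztig's (or Andersen--Polo--Wen's) theory its character is given by Weyl's formula — this is where genericity does the real work, via either the quantum Weyl character formula for $U_q$ at generic $q$ or, when $q=1$ in characteristic $0$, the classical Weyl character formula for $U(\fg)$. Because $\Delta_\SA(\lambda)$ is free, $\cha\Delta_\SA(\lambda)=\cha\Delta_\SK(\lambda)$, and we are done. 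The main obstacle I anticipate is the honest verification that $\Delta_\SA(\lambda)$ — defined somewhat indirectly as a submodule of $L_\SK(\lambda)$ — really is the universal Weyl module and hence commutes with base change from $\DZ[v,v^{-1}]$; once that identification is in place the freeness and the character formula are citations (Lusztig's integral PBW basis and the generic-case Weyl character formula respectively), so I would be content to phrase this last part by appealing to \cite{APW} and \cite{L2} rather than reproving it.
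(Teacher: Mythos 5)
The paper does not prove this proposition at all: the sentence immediately preceding it reads ``The following result can be found in \cite[Proposition 1.22]{APW},'' and that is the entirety of the justification given. So in the only sense that matters for this comparison, your closing remark — that you would ``be content to phrase this last part by appealing to \cite{APW} and \cite{L2} rather than reproving it'' — is precisely what the paper does, and more.

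That said, a few notes on your sketch of what lies behind the citation. The genuinely delicate point, which you correctly flag, is reconciling the paper's definition of $\Delta_\SA(\lambda)$ (the $U_\SA$-submodule of $L_\SK(\lambda)$ generated by a highest weight vector, where $\SK=\Quot\SA$) with a construction that visibly commutes with base change from $\SZ=\DZ[v,v^{-1}]$; this identification is not automatic and is exactly where the heavy lifting from \cite{APW} enters. Your remark that genericity ``means $q$ is not a root of unity or $\SK$ has characteristic zero'' is close to but not quite the definition in the paper (genericity is the nonvanishing of the images of $[n]_d$ in $\SA$; $q=1$ in characteristic $0$ is the boundary case that makes the phrasing awkward), but the intended use — that over $\SK$ the simple $L_\SK(\lambda)$ has Weyl character — is the correct one. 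None of this constitutes a gap relative to the paper, since the paper itself supplies no argument; your sketch is a reasonable expansion of the citation rather than a divergent route.
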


Let $M$ be an object in $\CO_\SA$. 
\begin{definition} We say that $M$ {\em admits a Weyl filtration} if there is a finite filtration $0=M_0\subset M_1\subset \dots\subset M_n=M$ and $\lambda_1,\dots,\lambda_
n\in X^+$ such that for each $i=1,\dots,n$, the subquotient $M_i/M_{i-1}$ is isomorphic to $W_\SA(\lambda_i)$. 

\end{definition}
We denote by $\CO_\SA^W$ the full subcategory of $\CO_\SA$ that contains all objects that admit a Weyl filtration. Note that if $\SA$ is generic and if $\SK$ is its quotient field, then  $\CO^W_\SK$ is a semisimple category (cf. Theorem 5.15 and Section 6.26 in \cite{JanQG}).

\subsection{A criterium for Weyl filtrations}
 Let $M$ be an object in $\CO_\SA$.

\begin{lemma}\label{lemma-freehws} Suppose that $\SA$ is generic.  Suppose that  $M$ is finitely generated as an $\SA$-module and  that there exists a dominant element $\lambda\in X$ such that the following holds.
\begin{enumerate}
\item The weight space $M_\lambda$ generates  $M$ as a $U^-_\SA$-module.
\item The weight space $M_\lambda$ is a free $\SA$-module of finite rank $r$. 
\end{enumerate}
Then $M$ is isomorphic to a direct sum of $r$ copies of $W_\SA(\lambda)$. 
\end{lemma}

\begin{proof} For all $\mu\in X$ we denote by $\Delta_\SA(\lambda):=U_\SA\otimes_{U_\SA^{\ge 0}}\SA_\lambda$ the {\em Verma module} in $\CO_\SA$ with highest weight $\lambda$. Here, $U_\SA^{\ge 0}=U^0_\SA U^+_\SA$, and $\SA_\lambda$ is the free $\SA$-module of rank $1$ on which $U_\SA^0$ acts via the character $\chi_\lambda$, and $U_\SA^+$ acts via the augmentation $U_\SA^+\to\SA$ that sends each $e_{\alpha}^{[n]}$ to $0$ if $n>0$. 
The assumptions on $M$ (and the PBW-theorem for $U_\SA)$) imply that there is a direct sum $V$ of $r$ copies of $\Delta_\SA(\lambda)$ and a surjective homomorphism $f\colon V\to M$ that is an isomorphism on the $\lambda$-weight space. Now $L_\SK(\lambda)$ is the only non-zero quotient of $\Delta_\SK(\lambda)$ that is finite dimensional, cf.~\cite[Proposition 3.2]{L2}, hence $f_\SK$ must induce an isomorphism $L_\SK\cong M_\SK$, where $L_\SK$ is a quotient of $V_\SK$ that is isomorphic to a  direct sum of $r$ copies of $L_\SK(\lambda)$. Hence $f$ factors over a (surjective) homomorphism $\tilde f\colon W\to M$, where $W$ is a direct sum of $r$ copies of the Weyl module $W_\SA(\lambda)$. We now show that $\tilde f$ is  an isomorphism. As it is surjective, it is sufficient to show that the characters of $W$ and $M$ agree, i.e.~we need to show that the character of $M$ is $r$ times the Weyl character $\chi(\lambda)$. 

The above shows that  the character of $M_\SK$ is  $r\chi(\lambda)$. Let $\SF$ be the residue field of $\SA$. Then the character of $M_\SF$ is at least $r\chi(\lambda)$. As $M_\SF$ is generated by its $\lambda$-weight space of dimension $r$ it must be at most $r\chi(\lambda)$, cf.~the proof of Proposition 1.22 in \cite{APW}. So the character of $M_\SF$ is $r\chi(\lambda)$,  hence the characters of $M_\SF$ and of $M_\SK$ agree. This  implies that $M$ is free as an $\SA$-module with character $r\chi(\lambda)$ as well (cf.~Section 1.21 in \cite{APW}). 
\end{proof}

For $\lambda\in X$ define $M_{[\lambda]}$ as the $\lambda$-weight space in the quotient $M/N(\lambda)$, where $N(\lambda)\subset M$ denotes the $U_\SA$-submodule in $M$ that is generated by all weight spaces $M_\mu$ such that $\mu\not\le\lambda$.

\begin{proposition}\label{prop-charWf} Suppose that $\SA$ is generic.  Let $M$ be an object in $\CO_\SA$. The following statements are equivalent.
\begin{enumerate}
\item The set of weights of $M$ is finite and  $M_{[\nu]}$ is a free $\SA$-module of finite rank for all $\nu\in X$. 
\item  $M$ admits a Weyl filtration.
\end{enumerate}
If either of the above holds, then the multiplicity of $W_\SA(\mu)$ in a Weyl filtration equals the rank of $M_{[\mu]}$. In particular, $M_{[\mu]}\ne 0$ implies that $\mu$ is dominant. 
\end{proposition} 
\begin{proof}  If $M$ admits a Weyl filtration, then its set of weights is finite. Standard arguments show that  $\Ext^1_{U_\SA}(W_\SA(\lambda),W_\SA(\nu))=0$ if $\nu\not>\lambda$. Now let $\nu\in X$. Then there is a Weyl filtration  $0=M_0\subset M_1\subset\dots\subset M_n=M$ and some $1\le r\le s\le n$ such that $M_i/M_{i-1}$ has a highest weight $\not\le\nu$ if $i<r$, has highest weight $\nu$ if $r\le i\le s$, and has a highest weight $<\nu$, if $i>s$. Hence $M_s/M_r$ is a direct sum of copies of $W_\SA(\nu)$ and $M_{[\nu]}=(M_s/M_r)_{\nu}$ is free of finite rank r-s as an $\SA$-module. Hence (2) implies (1). 

Now assume that (1) holds. As the set of weights of $M$ is finite there  must be a minimal $\mu$ such that $M_{[\mu]}\ne 0$. Let us fix such a $\mu$. Consider  $N:=N(\mu)\subset M$ and the quotient $M^\prime=M/N$. Then $M^\prime_\mu=M_{[\mu]}$. We claim the following.
\begin{enumerate}
\item[(a)] We have $N_{[\nu]}\cong M_{[\nu]}$ for all $\nu\ne\mu$, and $N_{[\mu]}=0$. 
\item[(b)] $M^\prime$ is generated, as a $U^-_\SA$-module, by its $\mu$-weight space (which is a free $\SA$-module of finite rank by the above).
\end{enumerate}
If both statements are true, then we can prove that $M$ admits a Weyl filtration  as follows. From (a) we can deduce, using an inductive argument, that $N$ admits a Weyl filtration. Note that the proof of Lemma \ref{lemma-freehws} shows that the fact that the set of weights of $M^\prime$ is finite implies that $\mu$ is dominant. Then from (b) we deduce, using Lemma \ref{lemma-freehws}, that $M/N=M^\prime$ is isomorphic to a direct sum of copies of $W_\SA(\mu)$. Hence $M$ admits a Weyl filtration. 

So let us prove (a). By definition, $N$ is generated by all weight spaces $N_\nu$ with $\nu\not\le\mu$. This implies that $N_{[\nu]}=0$ for all $\nu\le\mu$. The minimality of $\mu$ implies $M_{[\nu]}=0$ for all $\nu<\mu$. Hence it remains to show that $N_{[\nu]}=M_{[\nu]}$ for all $\nu\not\le\mu$. But this is implied by $N_\nu=M_\nu$ for all such $\nu$.

We prove (b).  If $M^\prime$ wasn't generated by its $\mu$-weight space, then there would exist a weight $\nu<\mu$ such that $M^\prime_{[\nu]}\ne 0$. But this would imply $M_{[\nu]}\ne 0$, a contradiction to the minimality of $\mu$. 

Note that the inductive argument above, together with Lemma \ref{lemma-freehws}, proves the remaining statement. 
\end{proof}

\subsection{Combinatorial models for objects in $\CO_\SA^W$}
Now we want to show that the objects in $\CX_\SA$ that are free of finite rank over $\SA$, correspond, via the realization functor $\mathsf{R}$, to the objects in $\CO_\SA^W$. In a first step we are interested in what happens if we apply the functor $\mathsf{S}$ to objects that admit a Weyl filtration. 

\begin{proposition}\label{prop-equivcat} Suppose that $\SA$ is generic. 
\begin{enumerate}
\item Let $M$ be an object in $\CO_\SA^W$. Then $\mathsf{S}(M)$ is an object in $\CX_\SA$. 
\item For all dominant $\lambda$ we have $\mathsf{S}(W_\SA(\lambda))\cong S_{\emptyset,\SA}(\lambda)$.
\end{enumerate}
\end{proposition}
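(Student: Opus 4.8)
The plan is to prove the two assertions of Proposition \ref{prop-equivcat} separately, using Lemma \ref{lemma-X3} to reduce the verification of (X3) to the three conditions listed there. First I would show that $\mathsf{S}(M)$ is an object in $\CX_\SA$ for any $M$ in $U_\SA\catmod_X^\Delta$. By Lemma \ref{lemma-funS} the object $\mathsf{S}(M)$ already satisfies (X1) and (X2), so it remains to check (X3). Since $M$ admits a Weyl filtration and each $\Delta_\SA(\lambda_i)$ is a free $\SA$-module of finite rank by Proposition \ref{prop-Weylchar}, the module $M$ is itself a free (hence torsion free) $\SA$-module, giving condition (1) of Lemma \ref{lemma-X3}. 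Condition (3) of that lemma, namely (X3c), i.e. that $M_\mu/\im F_\mu$ is free, should follow from the same filtration argument combined with the structure of the Weyl modules; one identifies $\im F_\mu^M$ with the submodule spanned by the images of the lower weight spaces under the $f_\alpha^{[n]}$, and a Weyl-filtration induction shows the quotient is free. For condition (2) of Lemma \ref{lemma-X3}, i.e. that $(M_\mu)_\SK=(\ker E_\mu)_\SK\oplus(\im F_\mu)_\SK$, I would base change to $\SK$: by hypothesis (b) in the definition of $U_\SA\catmod_X^\Delta$, $M_\SK$ is a direct sum of irreducibles $L_\SK(\lambda_i)$, and for an integrable irreducible the weight space decomposition together with the $\mathfrak{sl}_2$-theory at each simple root gives exactly $(M_\SK)_\mu=\ker E_\mu^{M_\SK}\oplus\im F_\mu^{M_\SK}$ (this is essentially the content already used in the proof of Proposition \ref{prop-irreps} via Remark \ref{rem-fieldcase}). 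Hence (X3) holds and $\mathsf{S}(M)\in\CX_\SA$.

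For the second assertion, I would first check that $\Delta_\SA(\lambda)$ lies in $U_\SA\catmod_X^\Delta$: it admits a (trivial, length-one) Weyl filtration, and $\Delta_\SA(\lambda)_\SK=L_\SK(\lambda)$ by construction (since $\SA$ is generic, $\Delta_\SA(\lambda)\otimes_\SA\SK$ is the irreducible highest weight module — over a generic field Weyl modules are irreducible), which is integrable and semisimple for $\lambda$ dominant. So by the first part $\mathsf{S}(\Delta_\SA(\lambda))$ is an object of $\CX_\SA$. Now I would identify it with $S_\SA(\lambda)$ using the characterization in Proposition \ref{prop-catsoft}: it suffices to show $\mathsf{S}(\Delta_\SA(\lambda))$ is soft, indecomposable, with $\lambda$-weight space free of rank $1$ and all weights $\le\lambda$. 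The last two properties are immediate from the construction of $\Delta_\SA(\lambda)$ and Proposition \ref{prop-Weylchar}. Indecomposability follows since $\Delta_\SA(\lambda)$ is generated by its one-dimensional top weight space, so $\mathsf{S}(\Delta_\SA(\lambda))$ is generated by $S(\lambda)_\lambda$ in the sense of the footnote to Lemma \ref{lemma-basechangeS}; alternatively, after base change to $\SK$ one gets $L_\SK(\lambda)\cong S_\SK(\lambda)$ by Proposition \ref{prop-irreps}, which is indecomposable, and indecomposability descends. For softness, I would use Proposition \ref{prop-irreps} and Remark \ref{rem-fieldcase}: $\mathsf{S}(\Delta_\SA(\lambda))_\SK\cong S_\SK(\lambda)$ is soft, and since the condition $M_{\lgl\mu\rgl,\min}=M_{\lgl\mu\rgl}$ can be detected after tensoring with $\SK$ for a torsion free module (both $M_{\lgl\mu\rgl,\min}$ and $M_{\lgl\mu\rgl}$ are saturated-free submodules here because of (X3c) and the freeness of the Weyl module), softness descends from $\SK$ to $\SA$. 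Then Proposition \ref{prop-catsoft}(1) forces $\mathsf{S}(\Delta_\SA(\lambda))\cong S_\SA(\lambda)$.

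The main obstacle I anticipate is the descent of softness from $\SK$ to $\SA$ — that is, arguing that $M_{\lgl\mu\rgl,\min}=M_{\lgl\mu\rgl}$ holds integrally and not just generically. One inclusion is automatic; for the reverse one needs that $E_\mu(M_\mu)$ is already contained in $E_\mu(\im F_\mu)$ over $\SA$, which is not a formal consequence of its holding over $\SK$ unless one knows the relevant modules are saturated. Here the freeness of $\Delta_\SA(\lambda)$ and the explicit combinatorics of the $e_\alpha^{[n]}, f_\alpha^{[n]}$ action on a Weyl module (for instance, that the top weight vector generates and that the standard monomial basis behaves well) should do the job, but making this precise is where the real work lies. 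The $\mathsf{S}(\Delta_\SA(\lambda))\cong S_\SA(\lambda)$ identification may alternatively be obtained more cleanly by invoking Lemma \ref{lemma-basechangeS} with the pair $(\SK,\SA)$ read in the appropriate direction, or by a direct uniqueness argument once softness and indecomposability are in hand.
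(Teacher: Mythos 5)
Your overall skeleton is right — reduce to (X3) via Lemma~\ref{lemma-funS}, then use Lemma~\ref{lemma-X3} to split (X3) into torsion-freeness, the generic splitting $(M_\mu)_\SK=(\ker E_\mu)_\SK\oplus(\im F_\mu)_\SK$, and (X3c) — and the first two bullets you handle as the paper does. But there are two genuine gaps.

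First, your treatment of (X3c) is too vague to count as a proof. The paper's key step, which you do not reproduce, is to \emph{reorder} the Weyl filtration using the vanishing $\Ext^1_{U_\SA}(\Delta_\SA(\lambda),\Delta_\SA(\nu))=0$ for $\nu\not>\lambda$, so that all subquotients $\Delta_\SA(\mu_i)$ with $\mu<\mu_i$ occur in an initial segment $M_1\subset\dots\subset M_r$. Once that is arranged, $\im F_\mu=(M_r)_\mu$ and $M_\mu/\im F_\mu\cong(M/M_r)_\mu$, which is free because $M/M_r$ again has a Weyl filtration (Proposition~\ref{prop-Weylchar}). Without the reordering, the identification of $\im F_\mu$ with a layer of the filtration is not available, and ``a Weyl-filtration induction shows the quotient is free'' is exactly the point that needs an argument.

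Second, and more seriously, your softness argument for $N:=\mathsf{S}(\Delta_\SA(\lambda))$ is incorrect. You propose to detect $N_{\lgl\mu\rgl,\min}=N_{\lgl\mu\rgl}$ after tensoring with $\SK$, but this equality \emph{always} holds over $\SK$ for any object of $\CX_\SA$ — condition (X3b) says precisely that $N_{\lgl\mu\rgl}/N_{\lgl\mu\rgl,\min}$ is torsion — so nothing is gained. Moreover, your parenthetical claim that $N_{\lgl\mu\rgl}$ is a ``saturated-free submodule'' of $N_{\delta\mu}$ because of (X3c) and freeness of $\Delta_\SA(\lambda)$ is precisely what the paper calls being \emph{saturated}, which is the extreme opposite of soft and is exactly the nontrivial property this whole paper is designed to probe; it is not a consequence of freeness. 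The correct argument is much shorter and you actually write down its key ingredient without using it: since $\Delta_\SA(\lambda)$ is generated as a $U_\SA$-module by its $\lambda$-weight space, every weight space $N_\mu$ with $\mu\ne\lambda$ satisfies $N_\mu=\im F_\mu$, and then $N_{\lgl\mu\rgl}=E_\mu(N_\mu)=E_\mu(\im F_\mu)=N_{\lgl\mu\rgl,\min}$ is immediate (the weight $\lambda$ itself is trivial since $N_{\delta\lambda}=0$). So $N$ is soft and indecomposable, hence $N\cong S_\SA(\nu)$ for some $\nu$ by Proposition~\ref{prop-catsoft}, and comparing weights gives $\nu=\lambda$.
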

\begin{proof} We prove claim (1).  In view of Lemma \ref{lemma-funS} we need to  show that the property (X3) is satisfied for $\mathsf{S}(M)$. So let $\mu\in X$. As $M_\SK$ is semisimple we have  $(\mathsf{S}(M)_\mu)_\SK=(\im F_\mu)_\SK\oplus(\ker E_\mu)_\SK$. As $M$ admits a Weyl filtration, Proposition \ref{prop-Weylchar} implies that $M$ is free as an $\SA$-module. By Lemma \ref{lemma-X3}, we now only need to   show that property (X3c) holds. Let
$
\{0\}=M_0\subset M_1\subset \dots\subset M_n=M
$ be a filtration 
such that $M_{i+1}/M_i$ is isomorphic to $W_\SA(\mu_i)$. As in the proof of Proposition \ref{prop-charWf}  we can assume that   there exists an integer $r$ such that $\mu<\mu_i$ implies $i\le r$. It follows that  $\im F_\mu =(M_r)_\mu\subset M_\mu$. As the quotient $M/M_r$ admits a filtration with subquotients isomorphic to Weyl modules with dominant highest weights,  it is free as an $\SA$-module by Proposition \ref{prop-Weylchar}. In particular, its $\mu$-weight space is free. By the above, this identifies with $M_\mu/\im F_\mu$. Hence property (X3c) holds, so $\mathsf{S}(M)$ is an object in $\CX_\SA$. 

Now we prove claim (2). 
For $N=\mathsf{S}(W_\SA(\lambda))$ we have $N_\mu=\im F_\mu$ for all $\mu\ne \lambda$. Hence this is an indecomposable $\emptyset$-standard object, so $N\cong S_{\emptyset,\SA}(\nu)$ for some $\nu\in X$ by Proposition \ref{prop-catGammaStand}.  A comparison of weights shows $\nu=\lambda$.
\end{proof}

\begin{definition}\label{def-Xf} We denote by $\CX^f$ the full subcategory of $\CX$ that contains all objects $M$ that are finitely generated as $\SA$-modules. 
\end{definition}
As we assume that each weight space of an object in $\CX$ is finitely generated, the property in the definition above is equivalent to the set of weights being finite. 
The following result shows that we have obtained a model category for the category of objects in $\CO_\SA$ that admit a Weyl filtration. 
\begin{theorem} \label{thm-Wflag} Suppose that $\SA$ is generic. Then the functors $\mathsf{S}$ and $\mathsf{R}$ induce mutually inverse equivalences between the categories $\CX^{f}$ and $\CO^{W}$.
\end{theorem}

\begin{proof}  In view of Proposition \ref{prop-equivcat} we need to show that $\mathsf{S}(M)$ is finitely generated as an  $\SA$-module for all objects $M$ in $\CO$ that admit a Weyl filtration and, conversely, that $\mathsf{R}(M)$ is admits a Weyl filtration for all objects $M$ in $\CX^f$. The first statement follows easily from the facts that the functor $\mathsf{S}$ is the identity functor on the underlying $\SA$-modules and that each Weyl module is free of finite rank as an $\SA$-module. 

Now suppose that $M$ is an object in $\CX$ that is finitely generated as an $\SA$-module. We want to employ Proposition \ref{prop-charWf}, so we need to check that $\mathsf{R}(M)$ has a finite set of weights and that $\mathsf{R}(M)_{[\mu]}$ is a free $\SA$-module of finite rank for all $\mu\in X$. The first statement is clear. For the second, note that we can canonically identify  $\mathsf{R}(M)_{[\mu]}$ with $M_\mu/\im F_\mu$. The latter is, by definition of the category $\CX$, a free $\SA$-module, and of finite rank as $M$ is of finite rank.
\end{proof}
Note that we deduce, in particular, that each object in $\CX^f$ is even {\em free} of finite rank as an $\SA$-module. 
\subsection{Base change, revisited}

Now let $\SA\to\SB$ be a homomorphism of factorial $\SZ$-algebras and suppose that both $\SA$ and $\SB$ are generic. The following result strengthens Lemma \ref{lemma-basechange} in the case that $M$ is an object in $\CX^f_\SA$, but $\SA\to\SB$ is not necessarily flat.

\begin{lemma}\label{lemma-genbaschg}  Let  $M$ be an object in  $\CX_\SA^{f}$, then $M_\SB$ is an object in $\CX_\SB^{f}$.
 \end{lemma}

\begin{proof} As we have seen above $M$ is a free $\SA$-module of finite rank. Hence $M_\SB$ is a free $\SB$-module of finite rank. So we only need to show that $M_\SB$ is an object in $\CX_\SB$. But the base change functor $\otimes_\SA\SB$ commutes with the functors $\mathsf{S}$ and $\mathsf{R}$. Theorem \ref{thm-Wflag} shows that $\mathsf{R}(M)$ admits a Weyl filtration. For the Weyl modules we  have  $W_\SA(\lambda)_{\SB}\cong W_{\SB}(\lambda)$ since both are generated by their $\lambda$-weight space and the characters agree (cf.~Proposition \ref{prop-Weylchar}). Hence $\mathsf{R}(M)\otimes_\SA\SB$ admits a Weyl filtration, and again using Theorem  \ref{thm-Wflag}  allows us to deduce that $\mathsf{S}(\mathsf{R}(M)\otimes_\SA\SB)=M_\SB$ is contained in $\CX$. \end{proof}

\subsection{The relevance of cyclotomic polynomials}
Now we show that the relevant torsion spaces are annihiliated by a product of quantum integers.  
\begin{lemma} \label{lemma-Nsatmax}  Let $M$ be an object in $\CX^f$. For all $\mu\in X$, the torsion part of the $\SA$-module   $M_{\delta\mu}/M_{\{\mu\}}$ is annihilated by (the image in $\SA$ of) a product of quantum integers $[n]_d$. 
\end{lemma}
\begin{proof} Let $\mu$ be an arbitrary weight of $M$.  As $M_{\delta\mu}$ is finitely generated it suffices to prove the following.
Let $m\in M_{\delta\mu}$ and $\xi\in{\SA}\setminus\{0\}$ be such that the following holds. 
\begin{enumerate}
\item $\xi$ is irreducible in $\SA$.
\item $\xi m \in M_{\{\mu\}}$, but $m\not\in M_{\{\mu\}}$.
\end{enumerate}
Then $\xi$ divides $[n]_d$ for some $n,d>0$. 

Consider the $\SA$-algebra $\SB:={\SA}/\xi {\SA}$. As $\xi $ is irreducible, $\SB$ is a factorial domain. As $\xi m\in M_{\{\mu\}}$, there exists an element $\tilde m\in F_\mu(M_{\delta\mu})$ such that $E_\mu(\tilde m)=\xi m$. As $E_\mu$ is injective when restricted to $F_\mu(M_{\delta\mu})$ and as $m\not\in M_{\{\mu\}}$, $\tilde m$ is not divisible by $\xi $ in $F_\mu(M_{\delta\mu})$. The image $\tilde m^\prime$ of $\tilde m$ in $(M_\mu)_{\SB}$ is hence non-zero. But since $E_\mu(\tilde m)$ is divisible by $\xi$, we have  $E^{M_\SB}_\mu(\tilde m^\prime)=0$. As $\tilde m^\prime$ is contained in the image of $F^{M_\SB}_\mu$, we deduce that $E^{M_\SB}_\mu$ is not injective when restricted to this image. Now Lemma \ref{lemma-genbaschg} implies that  $\SB$ is not generic. Hence $[n]_d$ must vanish in $\SB$ for some pair $(n,d)$, so $\xi$ must divide $[n]_d$.
\end{proof}

In particular, $S_{\SA.\Gamma}(\lambda)$ depends only on the irreducible factors of the quantum integers that are contained in $\Gamma$.

\subsection{$\Gamma$-Standard objects in $\CX^f$}
We assume that $\SA$ is local and generic. 
 The next main result is that for a dominant weight $\lambda$ and any subset $\Gamma$ of $\SA^{\irr}$  the object $S_\Gamma(\lambda)$ is  contained in $\CX^f$. Hence it admits a Weyl filtration when considered as a $U_\SA$-module. Denote by $\SF$ the residue field of $\SA$.   For an $\SA$-module $M$ we denote by $\ol M:=M\otimes_{\SA}\SF$ the corresponding $\SF$-vector space, and for an $\SA$-linear homomorphism $f\colon M\to N$ we denote by $\ol f\colon \ol M\to \ol N$ the induced $\SF$-linear homomorphism.

\begin{lemma}\label{lemma-minmax}  Let $M$ be an object in $\CX^f$. Suppose that $\mu\in X$ is such that $M_{\delta\mu}/M_{\{\mu\}}$ has a non-vanishing torsion part.  Then $\mu$ is dominant. 
\end{lemma}

\begin{proof} Recall that $M_{\{\mu\}}=E_\mu(F_\mu(M_{\delta\mu}))\subset M_{\delta\mu}$.  Hence  our assumption implies that  the restriction of $\ol E_\mu\colon \ol{M_\mu}\to \ol{M_{\delta\mu}}$ to $\ol{F_\mu(M_{\delta\mu})}$ has a non-trivial kernel.  This means that the $U_\SF$-module $\ol{\mathsf{R}(M)}$ contains a non-trivial primitive vector of weight $\mu$. But $\mathsf{R}(M)$  admits a Weyl filtration by Theorem \ref{thm-Wflag}. Hence there must be a Weyl module $\ol{W_\SA(\lambda)}$ for $U_{\SF}$ containing a non-trivial primitive vector of weight $\mu$. This implies that $\mu$ is dominant. \end{proof}

\begin{proposition}\label{prop-Sdom} Let $\lambda$ be a dominant weight and $\Gamma$ a subset of $\SA^{\irr}$. Then  $S_\Gamma(\lambda)$ is an object in $\CX^f$.  In particular, $\mathsf{R}(S_\Gamma(\lambda))$ admits a Weyl filtration.
\end{proposition}
\begin{proof} We need to show that the set of weights of $S_\Gamma(\lambda)$ is finite, cf. the remark after Definition \ref{def-Xf}. Denote by $M\subset S_\Gamma(\lambda)$ the smallest $E$- and $F$-stable $X$-graded subspace of $S_\Gamma(\lambda)$ that contains $S_\Gamma(\lambda)_\mu$ for all dominant weights $\mu$.  If $M\ne S_\Gamma(\lambda)$, then there exists a weight $\mu$ which is not dominant such that $M_\mu\ne S_\Gamma(\lambda)_\mu$. If we take a maximal $\mu$ with this property, we deduce that $S_\Gamma(\lambda)_{\delta\mu}=M_{\delta\mu}$ and hence $\im F_\mu^M=\im F_\mu^{S_\Gamma(\lambda)}$. From this we obtain $M_{\{\mu\}}=S_\Gamma(\lambda)_{\{\mu\}}$. But from Lemma \ref{lemma-minmax} and the fact that $\mu$ is not dominant we deduce that $S_{\Gamma}(\lambda)_{\{\mu\}}=S_{\Gamma}(\lambda)_{\{\mu\},\max}$, so the construction of $S_\Gamma(\lambda)$ implies $S_\Gamma(\lambda)_\mu=\im F_\mu^{S_\Gamma(\lambda)}$. As we have seen above, this equals $\im F_\mu^M$, hence $M_\mu=S_\Gamma(\lambda)_\mu$ which contradicts our choice of $\mu$. 

Hence $S_\Gamma(\lambda)$ is ``generated by dominant weights''. Hence so is $S_\Gamma(\lambda)\otimes_\SA\SK$, where $\SK$ is the quotient field of $\SA$. Now $S_\Gamma(\lambda)\otimes_\SA\SK$ splits into a direct sum of Weyl modules (when considered as a representation of $U_\SK$), and these Weyl modules have dominant weights as highest weights. Hence the set of weights is finite. The last statement now follows from Theorem \ref{thm-Wflag}.
\end{proof}

\section{Maximal extensions and tilting modules}
Let $\lambda$ be an element in $X$ and $\Gamma$ subset of $\SA^{\irr}$. Then we have the $\Gamma$-standard object $S_{\Gamma}(\lambda)$ that is characterized in Proposition \ref{prop-catGammaStand}. Theorem \ref{thm-XU} allows us to view this as a module for the quantum group $U_\SA$, and even  as an object in $\CO_{\SA}$. A natural question  is if we can characterize this object as a $U_\SA$-module. For dominant weights $\lambda$ and $\Gamma=\emptyset$ this is answered  in Proposition \ref{prop-equivcat}: the object $S_\emptyset(\lambda)$ corresponds to the Weyl module with highest weight $\lambda$. In this section we give the answer in the other extreme case. For dominant $\lambda$ and $\Gamma=\SA^{\irr}$ the object $S_{\Gamma\setminus\{0\}}(\lambda)$ corresponds to the indecomposable tilting module with highest weight $\lambda$.

\subsection{Contravariant forms on objects in $\CX$}

Let $M$ be an object in $\CX_\SA$ and let $b\colon M\times M\to \SA$ be an $\SA$-bilinear form.
\begin{definition} We say that $b$ is a {\em symmetric contravariant form on $M$} if the following holds.
\begin{enumerate}
\item  $b$ is symmetric.
\item $b(m,n)=0$ if $m\in M_\mu$ and $n\in M_\nu$ and $\mu\ne\nu$. 
\item  For all $\mu\in X$, $\alpha\in\Pi$, $n>0$, $x\in M_{\mu}$, $y\in M_{\mu+n\alpha}$ we have
$$
b(E_{\mu,\alpha,n}(x),y)=b(x,F_{\mu,\alpha,n}(y)).
$$
\end{enumerate}
\end{definition}

In order to study contravariant forms, the following quite general result will be helpful for us. 

\begin{lemma}\label{lemma-biforms} Let $S$ and $T$ be $\SA$-modules and assume that $T$ is projective as an $\SA$-module.  Let $F\colon S\to T$ and $E\colon T\to S$ be homomorphisms. Suppose that 
 $b_S\colon S\times S\to \SA$ and $b_T\colon T\times T\to \SA$ are symmetric, non-degenerate bilinear forms such that  
$$
b_T(F(s),t)=b_S(s,E(t))
$$
for all $s\in S$ and $t\in T$. 
If the inclusion $i_F\colon F(S)\subset T$ splits, then  the inclusion $i_E\colon E(T)\subset S$ splits as well.
\end{lemma}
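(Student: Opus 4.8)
The statement is a symmetric-duality assertion: given compatible nondegenerate forms on $S$ and $T$ intertwining $F$ and $E$, splitting of $i_F$ forces splitting of $i_E$. The plan is to use the forms to identify $E$ (up to the induced isomorphisms $S\cong S^\ast$, $T\cong T^\ast$) with the transpose of $F$, and then to recognize that, over a local ring where everything is free of finite rank, splitting of the image inclusion of a map between free modules is controlled by the cokernel being projective (equivalently free), and that this property is self-dual. Concretely, let $\phi_S\colon S\xrightarrow{\sim} S^\ast=\Hom_\SA(S,\SA)$ and $\phi_T\colon T\xrightarrow{\sim} T^\ast$ be the isomorphisms coming from $b_S$ and $b_T$ (these are isomorphisms because the forms are nondegenerate and the modules are free of finite rank over a local ring). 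The hypothesis $b_T(F(s),t)=b_S(s,E(t))$ says exactly that the square relating $\phi_S$, $\phi_T$, $E$ and $F^\ast\colon T^\ast\to S^\ast$ commutes, i.e. $E=\phi_S^{-1}\circ F^\ast\circ\phi_T$. Thus $E$ is isomorphic, as a morphism of $\SA$-modules, to $F^\ast$.

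First I would reduce to a purely module-theoretic claim: \emph{for a homomorphism $F\colon S\to T$ of free finite-rank modules over a local ring $\SA$, the inclusion $F(S)\subset T$ splits if and only if $T/F(S)$ is a projective (equivalently free) $\SA$-module}. The ``if'' direction is the standard splitting of a surjection onto a projective; the ``only if'' direction is immediate since a direct summand of the free module $T$ is projective. Then I would observe that splitting of $i_F$ also forces $F(S)$ itself to be a direct summand of $T$, hence projective and free, so $F$ factors as $S\twoheadrightarrow F(S)$ followed by a split inclusion; in particular $\ker F$ is a direct summand of $S$ and $S\cong\ker F\oplus F(S)$, with $F(S)$ free. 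Dualizing, $F^\ast\colon T^\ast\to S^\ast$ then has image $(\ker F)^\perp\cong (S/\ker F)^\ast\cong F(S)^\ast$, which is free, and cokernel $S^\ast/F^\ast(T^\ast)\cong(\ker F)^\ast$, which is free since $\ker F$ is a direct summand of a free module. By the module-theoretic claim applied to $F^\ast$, the inclusion $F^\ast(T^\ast)\subset S^\ast$ splits. Transporting back through the isomorphisms $\phi_S,\phi_T$, which carry $F^\ast$ to $E$, we conclude that $E(T)\subset S$ splits.

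The one genuinely careful point — and the place I expect to spend the most effort — is bookkeeping the identification of $E$ with $F^\ast$ under $\phi_S$ and $\phi_T$, and checking that the subobjects correspond correctly: that $\phi_S$ carries $E(T)\subset S$ to $F^\ast(T^\ast)\subset S^\ast$, and that a splitting of one gives a splitting of the other. This is a routine but slightly fiddly diagram chase, and it is where one must actually use symmetry of $b_S$ and $b_T$ (so that $\phi_S$, $\phi_T$ are genuine isomorphisms onto the full linear dual, not merely into it, and so the adjunction relation has no stray transpose). Everything else — freeness of direct summands over a local ring, splitting of surjections onto projectives, left-exactness of $\Hom(-,\SA)$ applied to the split sequence $0\to\ker F\to S\to F(S)\to 0$ — is standard and I would invoke it without computation. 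Note that genericity of $\SA$ is not needed here; only that $\SA$ is local (and that $S$, $T$ are free of finite rank, as assumed), consistent with the hypotheses of the lemma.
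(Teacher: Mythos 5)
Your proof is correct and takes essentially the same route as the paper's: use the two non-degenerate forms to identify $E$ with a transpose of $F$, then transport the freeness consequences of the split $i_F$ across the duality to split $i_E$. The only difference is organizational — the paper factors through $F(S)^\ast$ via an auxiliary isomorphism $\phi\colon E(T)\to F(S)^\ast$, whereas you identify $E$ with the full transpose $F^\ast\colon T^\ast\to S^\ast$ and compute its cokernel $(\ker F)^\ast$ directly.
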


\begin{proof} We denote by $O^\ast=\Hom_\SA(O,\SA)$ the $\SA$-linear dual of an $\SA$-module $O$. From the non-degeneracy of $b_S$ and the adjointness property it follows that $\ker F$ coincides with the set of all $s\in S$ such that $b_S(s,s^\prime)=0$ for all $s^\prime\in E(T)$. Hence we can define a homomorphism $\phi\colon E(T)\to F(S)^\ast$ by setting $\phi(E(t))(F(s))=b_S(s,E(t))$. Then the right hand side of the diagram

\centerline{
\xymatrix{
T\ar[d]_{b_T}\ar[r]^E&E(T)\ar[r]^{i_E}\ar[d]^{\phi}&S\ar[d]^{b_S}\\
T^\ast\ar[r]^{i_F^\ast}&F(S)^\ast\ar[r]^{F^\ast}&S^\ast
}
}
\noindent
commutes. By the adjointness property, $\phi(E(t))(F(s))=b_T(F(s),t)$ for all $s\in S$ and $t\in T$, hence also the left hand side commutes. The vertical homomorphisms on the left and the right are isomorphisms.
 As $i_E$ is injective, $\phi$ is injective. Suppose that   $i_F\colon F(S)\subset T$ splits. Then the dual homomorphism   $i_F^\ast\colon T^\ast\to F(S)^\ast$ is surjective. Hence $\phi$ is surjective, so it is an isomorphism. As $F(S)$ is projective (it is a direct summand of $T$), the surjective homomorphism $S\to F(S)$ splits, and hence $F^\ast\colon F(S)^\ast\to S^\ast$ splits. Hence the inclusion $i_E\colon E(T)\to S$ splits. \end{proof}

The following is our main application of the previous lemma. 
\begin{proposition}\label{prop-cform}  Let $M$ be an object in $\CX_\SA$. Suppose that there exists a non-degenerate symmetric contravariant form on $M$ and that $M_\mu$ is a free $\SA$-module for all $\mu\in X$.  Then $M$ is $\SA^{\irr}$-saturated, i.e.~the quotient $M_{\delta\mu}/M_{\lgl\mu\rgl}$ is torsion free  for all $\mu\in X$.  
\end{proposition}

\begin{proof}
Let   $b\colon M\times M\to \SA$ be a non-degenerate symmetric contravariant form on $M$. For all $\mu\in X$ it  induces  symmetric, non-degenerate bilinear forms $b_{\delta\mu}$ and $b_\mu$ on the (free) $\SA$-modules $M_{\delta\mu}$ and $M_\mu$, resp., with
$$
b_\mu(F_\mu(v),w)=b_{\delta\mu}(v,E_\mu(w))
$$
for all $v\in M_{\delta\mu}$ and $w\in M_\mu$. Moreover, since $M$ is an object in $\CX_\SA$, the quotient $M_\mu/F_\mu(M_{\delta\mu})$ is a free $\SA$-module. Hence the inclusion $F(M_{\delta\mu})\subset M_\mu$ splits. Hence we can apply Lemma \ref{lemma-biforms} and deduce that the inclusion $M_{\lgl\mu\rgl}=E_\mu(M_\mu)\subset M_{\delta\mu}$ splits. As $M_{\delta\mu}$ is free, this implies that the quotient   $M_{\delta\mu}/M_{\lgl\mu\rgl}$ is  torsion free.  It follows that $M_{\{\mu\},\max}\subset M_{\lgl\mu\rgl}$, hence  $M_{\{\mu\},\max}=M_{\lgl\mu\rgl}$.
\end{proof}

\subsection{Tilting modules} There is also the notion of a contravariant form on representations of  a quantum group. Before we come to its definition, recall that there is an  antiautomorphism $\tau$ of order $2$ on $U_\SA$ that maps $e_\alpha$ to $f_\alpha$ and $k_\alpha^{\pm 1}$ to $k_\alpha^{\pm 1}$ (this is an immediate consequence of the definition of $U_\SZ$ by generators and relations). The contravariant dual of an object $M$ in $\CO_\SA$ is given by $dM=\bigoplus_{\nu\in X} M_\nu^\ast \subset M^\ast$  with the action of $U_\SA$ twisted by the antiautomorphism $\tau$. A homomorphism $M\to dM$ is hence the same as an $\SA$-bilinear form $b\colon M\times M\to \SA$ that satisfies
\begin{itemize}
\item $b(x.m,n)=b(m,\tau(x).n)$ for all $x\in U_\SA$, $m,n\in M$.
\item $b(m,n)=0$ if $m\in M_\mu$, $n\in M_\nu$ and $\mu\ne\nu$.
\end{itemize}
Such a form is also called a {\em contravariant form on $M$}. 

Assume now that $\SA$ is local. 
For a dominant weight  $\lambda\in X$ we denote by $T(\lambda)=T_\SA(\lambda)$ the indecomposable tilting module for $U_\SA$ associated with the weight $\lambda$. 
\begin{lemma}\label{lemma-exsym} We assume that $\SA$ is local. Suppose that $2$ is invertible in $\SA$. Then there exists a symmetric, non-degenerate contravariant form on $T(\lambda)$. 
\end{lemma}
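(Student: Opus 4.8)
The plan is to obtain the form from the self-duality of $T(\lambda)$ and then to repair symmetry using that $\End_{U_\SA}(T(\lambda))$ is a local ring. First I recall that the contravariant duality $M\mapsto dM$ on $U_\SA\catmod_X$ is exact (each weight space being free of finite rank) and that a module has a Weyl filtration if and only if its contravariant dual has a filtration by the modules $d\Delta_\SA(\mu)$. Hence, $T(\lambda)$ being tilting, both $T(\lambda)$ and $dT(\lambda)$ admit a Weyl filtration and a ``dual Weyl'' filtration, so $dT(\lambda)$ is again a tilting module; moreover $d$ preserves the set of weights and, since $d^2\cong\id$, preserves indecomposability. Therefore $dT(\lambda)$ is the indecomposable tilting module of highest weight $\lambda$, i.e.\ $dT(\lambda)\cong T(\lambda)$. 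Fix such an isomorphism; by the dictionary recalled just above Lemma~\ref{lemma-exsym} it is the same datum as a contravariant form $b$ on $T(\lambda)$, and $b$ is non-degenerate precisely because the associated homomorphism $\phi_b\colon T(\lambda)\to dT(\lambda)$ is an isomorphism. Here I use that $T(\lambda)$ is free of finite rank over $\SA$, which follows from Proposition~\ref{prop-Weylchar} together with the existence of a Weyl filtration of $T(\lambda)$ with dominant subquotients.

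Since $2$ is invertible in $\SA$, write $b=b^{\mathrm s}+b^{\mathrm a}$ with $b^{\mathrm s}(m,n)=\tfrac12(b(m,n)+b(n,m))$ and $b^{\mathrm a}(m,n)=\tfrac12(b(m,n)-b(n,m))$. A direct check using only $\tau^2=\id$ shows that $b^{\mathrm s}$ and $b^{\mathrm a}$ are again contravariant forms, symmetric resp.\ skew-symmetric. The key observation is that $b^{\mathrm a}$ is degenerate: in a Weyl filtration of $T(\lambda)$ only the subquotient $\Delta_\SA(\lambda)$ contributes to the weight $\lambda$, so $T(\lambda)_\lambda$ is free of rank $1$, say on a generator $v_\lambda$; for $m$ in a weight space other than $\lambda$ one has $b^{\mathrm a}(v_\lambda,m)=0$ because a contravariant form pairs distinct weight spaces to zero, and $b^{\mathrm a}(v_\lambda,v_\lambda)=0$ because $b^{\mathrm a}$ is skew-symmetric and $2$ is a non-zero-divisor; hence $b^{\mathrm a}(v_\lambda,-)\equiv 0$ on $T(\lambda)$, so $\phi_{b^{\mathrm a}}$ is not injective and $b^{\mathrm a}$ is degenerate. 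Now $\End_{U_\SA}(T(\lambda))$ is local because $T(\lambda)$ is indecomposable (this is where the standing assumption that $\SA$ is local is used, via projective covers), and composition with the fixed isomorphism $dT(\lambda)\cong T(\lambda)$ identifies $\Hom_{U_\SA}(T(\lambda),dT(\lambda))$ with a free rank-one module over it, in which the isomorphisms $T(\lambda)\to dT(\lambda)$ correspond to the units. The class of $\phi_b$ is a unit and equals $\tfrac12$ times the sum of the classes of $\phi_{b^{\mathrm s}}$ and $\phi_{b^{\mathrm a}}$; since the latter is a non-unit (a degenerate form yields a non-isomorphism), the class of $\phi_{b^{\mathrm s}}$ is a unit. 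Thus $\phi_{b^{\mathrm s}}$ is an isomorphism, i.e.\ $b^{\mathrm s}$ is a non-degenerate, symmetric, contravariant form on $T(\lambda)$, which proves the lemma.

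The argument itself is short; the points that need care are the standard structural inputs it relies on rather than any calculation: the self-duality $dT(\lambda)\cong T(\lambda)$, the freeness of $T(\lambda)$ and of $T(\lambda)_\lambda$ over $\SA$, and --- the most delicate one, since $\SA$ is only assumed local and not complete --- the locality of $\End_{U_\SA}(T(\lambda))$. I expect this last point to be the main obstacle to a fully self-contained write-up; once it is granted, the only genuinely new ingredient is the remark in the second paragraph that a highest weight vector lies in the radical of every skew-symmetric contravariant form, so the skew part of $b$ cannot be non-degenerate, after which the local structure of the endomorphism ring forces the symmetric part of $b$ to be non-degenerate.
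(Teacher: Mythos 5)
Your proof is correct and takes essentially the same route as the paper: self-duality of $T(\lambda)$ produces a non-degenerate contravariant form, symmetrization together with the invertibility of $2$ makes the restriction to the rank-one highest weight space a unit, and the indecomposability of $T(\lambda)$ (equivalently the local structure of $\End_{U_\SA}(T(\lambda))$, which the paper invokes just as implicitly as you do) forces the symmetrized form to be non-degenerate --- the paper's version is marginally more direct in that it sets $b:=b'+t.b'$ and simply reads off $b(v_\lambda,v_\lambda)=2\,b'(v_\lambda,v_\lambda)$ as a unit rather than passing through the degeneracy of the skew part and the local ring. One small arithmetic slip in your write-up: since you already absorbed the $\tfrac12$ into $b^{\mathrm s}$ and $b^{\mathrm a}$, the relation is $\phi_b=\phi_{b^{\mathrm s}}+\phi_{b^{\mathrm a}}$ with no further factor of $\tfrac12$, though this does not affect the argument.
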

\begin{proof} Each tilting module is selfdual with respect to the contravariant duality. There exists hence a non-degenerate contravariant bilinear form $b^\prime$ on $T(\lambda)$. We set $b:=b^\prime+t.b^\prime$, where $t.b^\prime$ is obtained from $b^\prime$ by switching the places of the arguments. So $b$ is now a symmetric contravariant bilinear form. As the highest weight space $T(\lambda)_\lambda$ is a free $\SA$-module of rank $1$ and as $2$ is invertible in $\SA$, we deduce that the restriction of $b$ to $T(\lambda)_\lambda$ is non-degenerate. So $b$ induces a homomorphism $T(\lambda)\to dT(\lambda)\cong T(\lambda)$ that is an isomorphism on the $\lambda$-weight space. But then this homomorphism must be an isomorphism. So $b$ is non-degenerate.
\end{proof}

\begin{theorem} \label{thm-tiltmax} Suppose that $\SA$ is generic and local and that $2$ is invertible in $\SA$. Let $\lambda$ be a dominant element in $X$. Then $\mathsf{S}(T(\lambda))\cong S_{\SA^{\irr}}(\lambda)$ and hence $\mathsf{R}(S_{\SA^{\irr}}(\lambda))\cong T(\lambda)$. 
\end{theorem}
\begin{proof} As $T(\lambda)$ admits a Weyl filtration with subquotients having dominant highest weights, it is an object in $\CO_\SA^W$. So we can apply Proposition \ref{prop-equivcat} and deduce that   $T:=\mathsf{S}(T(\lambda))$ is contained in $\CX_\SA$.  By Lemma \ref{lemma-exsym} there exists a symmetric, non-degenerate contravariant bilinear form on $T(\lambda)$. Such a bilinear form induces a non-degenerate  symmetric contravariant bilinear form on $T$.  Proposition \ref{prop-cform} implies that $T$ is $\SA^{\irr}$-saturated.  As it is indecomposable, part (3) of Proposition  \ref{prop-catGammaStand}  implies that we have $T\cong S_{\SA^{\irr}}(\mu)$ for some $\mu$. As $\lambda$ is the maximal weight of $T(\lambda)$, we have $\mu=\lambda$. 
\end{proof}

\begin{remark} As explained in Remark \ref{rem-algo}, the proof of Proposition \ref{prop-GammaExtObj} can be read as an algorithm for constructing the objects $S_{\SA^{\irr}}(\lambda)$. Using the above theorem we hence obtain an algorithm that produces the weight spaces of the tilting modules $T(\lambda)$ for dominant $\lambda$.
\end{remark}

\subsection{Proof of the Main Theorem} We now are ready to collect the results in the previous sections. Suppose that $p$ is an odd prime and $p\ne 3$ if $R$ contains a component of type $G_2$. Let  $\SA=\SZ_\fp$ be the algebra of quantum $p$-adic integers (cf. Section \ref{subsec-groundrings}). For $l\in\DN$ denote by $\tau_l\in\SZ_{\fp}$ the $p^l$-cyclotomic polynomial. Note that this is irreducible in $\SZ_\fp$. 
Let $\Theta$ be a subset of $\DN$ and let $\Gamma_\Theta=\{\tau_l\mid l\in\Theta\}$. Then we  set for all  $\lambda\in X$ 
$$
T_{\Theta}(\lambda)=\mathsf{R}(S_{\Gamma_\Theta}(\lambda)).
$$
This is an object in $\CO_\SA$. Let us state the Main Theorem from the introduction again, and then prove it. 

\begin{MainTheorem}  Let $\lambda$ be a dominant weight.
\begin{enumerate}
\item $T_{\emptyset}(\lambda)$ is the Weyl module with highest weight $\lambda$. 
\item $T_{\DN}(\lambda)$ is the indecomposable tilting module with highest weight $\lambda$.
\item Each $T_{\Theta}(\lambda)$ admits a Weyl filtration.
\item  If $l\in\Theta$, then the character of $T_{\Theta}(\lambda)$ is a sum of tilting characters of the quantum group at an $p^l$-th root of unity.
\end{enumerate}
\end{MainTheorem}
Note that the characters of the tilting modules for a quantum group at a root of unity are known (cf. \cite{Soe}). 

\begin{proof} Part (1) is the inverse statement of part (2) in Proposition \ref{prop-equivcat}. Note that each quantum integer is a product of cyclotomic polynomials. Hence Lemma \ref{lemma-Nsatmax} implies that $S_{\Gamma_\DN}(\lambda)=S_{\SA^{\irr}}(\lambda)$. Part (2) above now follows from  Theorem \ref{thm-tiltmax}. Part (3) is Proposition \ref{prop-Sdom}. Now we prove part (4). Let $\SA=(\SZ_{\fp})_{(\tau_l)}$ be the localization of $\SZ_\fp$ at the ideal generated by $\tau_l$. Then the residue field of $\SA$ is $\DQ[\zeta_{p^l}]$, the $p^l$-cyclotomic field. From  the base change result in Lemma \ref{lemma-basechangeS} we obtain that $T:=T_{\Gamma}(\lambda)\otimes_{\SZ_\fp}\SA$ is an object in $\CX_{\SA}$. Since $l\in\Theta$ and since $\tau_{l}$ is, up to units, the only irreducible element in $\SA$, the object $\mathsf{S}(T)$ must be $\SA^{\irr}$-standard. Theorem \ref{thm-tiltmax} shows that $T$ is a tilting module in $\CO_\SA$. In particular, the specialization $T\otimes_\SA\DQ[\zeta_{p^l}]$ is a tilting module for the quantum group at a $p^l$-root of unity. 
\end{proof}

\end{document}